\newtheorem{theorem}{Theorem}
\newtheorem{definition}{Definition}
\newtheorem{lemma}{Lemma}
\newtheorem{remark}{Remark}
\renewcommand{\det}{{\mathrm{det}}}
\newcommand{\differential}{{\rm{d}}}
\newcommand{\prox}{{\mathrm{prox}}}
\DeclareMathOperator{\sech}{sech}
\title{\LARGE \bf
Hopfield Neural Network Flow: A Geometric Viewpoint
}
\author{Abhishek Halder, Kenneth F. Caluya, Bertrand Travacca, and Scott J. Moura
\thanks{Abhishek Halder and Kenneth F. Caluya are with the Department of Applied Mathematics, University of California, Santa Cruz, CA 95064, USA,
        {\tt\small{\{ahalder,kcaluya\}@ucsc.edu}}
        
       Bertrand Travacca and Scott J. Moura are with the Department of Civil and Environmental Engineering, University of California at Berkeley, CA 94720, USA,
        {\tt\small{\{bertrand.travacca,smoura\}@berkeley.edu}}%
}}
\begin{document}

\maketitle
\thispagestyle{empty}
\pagestyle{empty}

\begin{abstract}
We provide gradient flow interpretations for the continuous-time continuous-state Hopfield neural network (HNN). The ordinary and stochastic differential equations associated with the HNN were introduced in the literature as analog optimizers, and were reported to exhibit good performance in numerical experiments. In this work, we point out that the deterministic HNN can be transcribed into Amari's natural gradient descent, and thereby uncover the explicit relation between the underlying Riemannian metric and the activation functions. By exploiting an equivalence between the natural gradient descent and the mirror descent, we show how the choice of activation function governs the geometry of the HNN dynamics.

For the stochastic HNN, we show that the so-called ``diffusion machine", while not a gradient flow itself, induces a gradient flow when lifted in the space of probability measures. We characterize this infinite dimensional flow as the gradient descent of certain free energy with respect to a Wasserstein metric that depends on the geodesic distance on the ground manifold. Furthermore, we demonstrate how this gradient flow interpretation can be used for fast computation via recently developed proximal algorithms.
\end{abstract}


\noindent{\bf Keywords: Natural gradient descent, mirror descent, proximal operator, stochastic neural network, optimal mass transport, Wasserstein metric.}


\section{Introduction}
Computational models such as the Hopfield neural networks (HNN) \cite{hopfield1982neural,hopfield1984neurons} have long been motivated as analog machines to solve optimization problems. Both deterministic and stochastic differential equations for the HNN have appeared in the literature, and the numerical experiments using these models have reported appealing performance \cite{tank1986simple,kennedy1988neural,narendra1990identification,li2001hopfield,travacca2018dual}. In \cite{wong1991stochastic}, introducing the stochastic HNN as the ``diffusion machine", Wong compared the same with other stochastic optimization algorithms, and commented: ``As such it is closely related to both the Boltzmann machine and
the Langevin algorithm, but superior to both in terms of possible integrated circuit realization", and that ``A substantial speed advantage for diffusion machines is likely". Despite these promising features often backed up via numerical simulations, studies seeking basic understanding of the evolution equations remain sparse \cite{liao1998robust,joya2002hopfield}. The purpose of this paper is to revisit the differential equations for the HNN from a geometric standpoint. 

Our main contribution is to clarify the interplay between the Riemannian geometry induced by the choice of HNN activation functions, and the associated variational interpretations for the HNN flow. Specifically, in the deterministic case, we argue that the HNN flow is natural gradient descent (Section \ref{SectionNatGradDescent}) of a function over a manifold $\mathcal{M}$ with respect to (w.r.t.) a distance induced by certain metric tensor $\bm{G}$ that depends on the activation functions of the HNN. This identification leads to an equivalent mirror descent interpretation (Section \ref{SectionMirrorDescent}). We work out an example and a case study (Section \ref{SectionExample}) to highlight these ideas.
 
In the stochastic HNN, the random fluctuations resulting from the noise do not allow the sample paths of the stochastic differential equation to be interpreted as gradient flow on $\mathcal{M}$. However, we argue (Section \ref{SectionStocHNN}) that the stochastic sample path evolution engenders a (deterministic) flow of probability measures supported on $\mathcal{M}$ which does have an infinite dimensional gradient descent interpretation. In other words, the gradient flow interpretation for the stochastic HNN holds in the macroscopic or ensemble sense. This viewpoint seems novel in the context of stochastic HNN, and indeed leads to proximal algorithms enabling fast computation (Section \ref{SubsecProxAlgo}). A comparative summary of the gradient flow interpretations for the deterministic and the stochastic HNN is provided in Table \ref{TableGradFlowHNN} (after Section \ref{SubsecProxAlgo}).

\subsubsection*{Notations} The notation $\nabla$ stands for the Euclidean gradient operator. We sometimes put a subscript as in $\nabla_{\bm{q}}$ to denote the gradient w.r.t. vector $\bm{q}$, and omit the subscript when its meaning is obvious from the context. As usual, $\nabla$ acting on vector-valued function returns the Jacobian. The symbol $\nabla^{2}$ is used for the Hessian. We use the superscript $^{\top}$ to denote matrix transposition, and $\mathbb{E}_{\rho}\left[\cdot\right]$ to denote the expectation operator w.r.t. the probability density function (PDF) $\rho$, i.e., $\mathbb{E}_{\rho}\left[\cdot\right] := \int_{\mathcal{M}}(\cdot)\rho(\bm{x})\:\differential\bm{x}$. In the development  that follows, we assume that all probability measures are absolutely continuous, i.e., the corresponding PDFs exist. The notation $\mathcal{P}_{2}\left(\mathcal{M}\right)$ stands for the set of all joint PDFs supported on $\mathcal{M}$ with finite second raw moments, i.e., $\mathcal{P}_{2}\left(\mathcal{M}\right) := \{\rho : \mathcal{M} \mapsto \mathbb{R}_{\geq 0} \mid \int_{\mathcal{M}}\rho\:\differential\bm{x} = 1, \mathbb{E}_{\rho}\left[\bm{x}^{\top}\bm{x}\right] < \infty\}$. The inequality $\bm{A} \succ \bm{0}$ means that the matrix $\bm{A}$ is symmetric positive definite. The symbol $\bm{I}$ denotes an identity matrix of appropriate dimension; $\bm{1}$ denotes a column vector of ones. For $\bm{x}\in\mathcal{M}$, the symbol $\mathcal{T}_{\bm{x}}\mathcal{M}$ denotes the tangent space of $\mathcal{M}$ at $\bm{x}$; we use $\mathcal{T}\mathcal{M}$ to denote the tangent space at a generic point in $\mathcal{M}$. The set of natural numbers is denoted as $\mathbb{N}$. We use the symbols $\odot$ and $\oslash$ for denoting element-wise (i.e., Hadamard) product and division, respectively. 


\section{HNN Flow as Natural Gradient Descent}\label{SectionNatGradDescent}
We consider the deterministic HNN dynamics given by
\begin{subequations}\label{CommuAsso}
\begin{align}
	\dot{\bm{x}}_{\rm{H}} &= -\nabla f(\bm{x}), \label{HNNode}\\
\bm{x} &= \bm{\sigma}\left(\bm{x}_{\rm{H}}\right), \label{sigmaMap}
\end{align}
\label{HNNivp}
\end{subequations}
with initial condition $\bm{x}(t=0) \in (0,1)^{n}$. Here, the function $f:\mathbb{R}^{n} \mapsto \mathbb{R}$ is continuously differentiable, and therefore admits a minimum on $[0,1]^{n}$. Without loss of generality, we assume $f$ to be non-negative on $[0,1]^{n}$. The vectors $\bm{x},\bm{x}_{\rm{H}} \in \mathbb{R}^{n}$ are referred to as the state and hidden state, respectively. The so called ``activation function" $\bm{\sigma}:\mathbb{R}^{n}\mapsto[0,1]^{n}$ is assumed to satisfy the following properties:
\begin{description}
\item[$\bullet$] homeomorphism,
\item[$\bullet$] differentiable almost everywhere,
\item[$\bullet$] has structure 
\begin{eqnarray}
	\left(\sigma_{1}({x_{\rm{H}}}_{1}), \sigma_{2}({x_{\rm{H}}}_{2}), \hdots, \sigma_{n}({x_{\rm{H}}}_{n})\right)^{\top},
	\label{componentStructure}
\end{eqnarray}
i.e., its $i$-th component depends only on the $i$-th component of the argument,
\item[$\bullet$] $\sigma_{i}(\cdot)$ for all $i=1,\hdots,n$ are strictly increasing.	
\end{description}
Examples of activation function include the logistic function, hyperbolic tangent function, among others.

To view the continuous-time dynamics (\ref{HNNivp}) from a geometric standpoint, we start by rewriting it as  
\begin{eqnarray}
\dot{\bm{x}} = -\left(\nabla_{\bm{x}_{\rm{H}}}\bm{\sigma}\right)\big\vert_{\bm{x}_{\rm{H}} = \bm{\sigma}^{-1}(\bm{x})} \nabla f(\bm{x}).
\label{dxdt}	
\end{eqnarray}
In words, the flow of $\bm{x}(t)$ is generated by a vector field that is negative of the Jacobian of $\bm{\sigma}$ evaluated at $\bm{x}_{\rm{H}} = \bm{\sigma}^{-1}(\bm{x})$, times the gradient of $f$ w.r.t. $\bm{x}$. Due to (\ref{componentStructure}), the Jacobian in (\ref{dxdt}) is a diagonal matrix. Furthermore, since each $\sigma_{i}$ is strictly increasing and differentiable almost everywhere, the diagonal terms are positive. Therefore, the right-hand-side (RHS) of (\ref{dxdt}) has the following form: negative of a positive definite matrix times the gradient of $f$. This leads to the result below.

\begin{theorem}\label{ThmNatGradFlow}
The flow $\bm{x}(t)$ governed by (\ref{dxdt}) (equivalently by (\ref{HNNivp})) is a natural gradient descent for function $f(\bm{x})$ on the manifold $\mathcal{M} = (0,1)^{n}$ with metric tensor $\bm{G} = [g_{ij}]_{i,j=1}^{n}$, given by
\begin{eqnarray}
g_{ij} = \begin{cases}
	1/\sigma^{\prime}_{i}(\sigma_{i}^{-1}(x_{i})) & \text{for}\quad i=j,\\
	0 & \text{otherwise,}
\end{cases}
\label{Mij}	
\end{eqnarray}
where $i,j=1,\hdots,n$, and $^{\prime}$ denotes derivative.
\end{theorem}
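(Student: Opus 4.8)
The plan is to recognize the right-hand side of (\ref{dxdt}) as Amari's natural gradient of $f$ with respect to a suitable metric, so that the statement reduces to a change-of-variables identification. Recall that on a Riemannian manifold $(\mathcal{M},\bm{G})$ with $\bm{G}\succ\bm{0}$, the natural gradient of a differentiable $f$ is $\bm{G}^{-1}\nabla f$, and the associated natural gradient descent flow is $\dot{\bm{x}} = -\bm{G}^{-1}\nabla f(\bm{x})$. Hence it suffices to exhibit a legitimate metric tensor $\bm{G}$ on $\mathcal{M}=(0,1)^{n}$ for which $\bm{G}^{-1}$ coincides with the matrix $\left(\nabla_{\bm{x}_{\rm{H}}}\bm{\sigma}\right)\big\vert_{\bm{x}_{\rm{H}}=\bm{\sigma}^{-1}(\bm{x})}$ that multiplies $\nabla f$ in (\ref{dxdt}).

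First I would make this Jacobian explicit. By the component-wise structure (\ref{componentStructure}), $\nabla_{\bm{x}_{\rm{H}}}\bm{\sigma}$ is the diagonal matrix $\mathrm{diag}\!\left(\sigma_{1}^{\prime}({x_{\rm{H}}}_{1}),\hdots,\sigma_{n}^{\prime}({x_{\rm{H}}}_{n})\right)$; the substitution ${x_{\rm{H}}}_{i}=\sigma_{i}^{-1}(x_{i})$ is legitimate because $\bm{\sigma}$ is a homeomorphism and each $\sigma_{i}$ is strictly increasing, so $\sigma_{i}^{-1}$ exists, and it yields the diagonal matrix with entries $\sigma_{i}^{\prime}(\sigma_{i}^{-1}(x_{i}))$. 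Since each $\sigma_{i}$ is strictly increasing and differentiable almost everywhere, these entries are strictly positive wherever defined, so this matrix is symmetric positive definite on $\mathcal{M}=(0,1)^{n}$ — which is precisely the set where the flow $\bm{x}(t)=\bm{\sigma}(\bm{x}_{\rm{H}}(t))$ lives, being the image of $\bm{\sigma}$. Its inverse is therefore well-defined, symmetric, and positive definite, and equals the diagonal matrix with entries $1/\sigma_{i}^{\prime}(\sigma_{i}^{-1}(x_{i}))$, which is exactly the $\bm{G}$ of (\ref{Mij}).

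Having identified $\bm{G}$, I would verify it qualifies as a Riemannian metric tensor on $\mathcal{M}$: it is symmetric (diagonal), positive definite (positive diagonal entries, as just argued), and it inherits the regularity of $\bm{\sigma}$ across $\mathcal{M}$. Substituting $\bm{G}^{-1}=\left(\nabla_{\bm{x}_{\rm{H}}}\bm{\sigma}\right)\big\vert_{\bm{x}_{\rm{H}}=\bm{\sigma}^{-1}(\bm{x})}$ back into (\ref{dxdt}) gives $\dot{\bm{x}} = -\bm{G}^{-1}\nabla f(\bm{x})$, which is by definition the natural gradient descent of $f$ on $(\mathcal{M},\bm{G})$, establishing the claim.

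The computation is essentially bookkeeping under the change of variables $\bm{x}=\bm{\sigma}(\bm{x}_{\rm{H}})$; the only points requiring care are (i) ensuring the diagonal Jacobian entries are \emph{strictly} positive rather than merely nonnegative, so that $\bm{G}$ is genuinely positive definite, which is where strict monotonicity of each $\sigma_{i}$ enters, and (ii) the almost-everywhere differentiability of $\bm{\sigma}$, which means $\bm{G}$ and the natural gradient structure are defined up to a measure-zero subset of $\mathcal{M}$; I would state the conclusion with that caveat, noting that for the standard choices (logistic, $\tanh$) the metric in (\ref{Mij}) is in fact smooth on all of $(0,1)^{n}$.
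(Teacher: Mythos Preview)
Your proposal is correct and follows essentially the same approach as the paper: both identify the Jacobian $\left(\nabla_{\bm{x}_{\rm{H}}}\bm{\sigma}\right)\big\vert_{\bm{x}_{\rm{H}}=\bm{\sigma}^{-1}(\bm{x})}$ as a diagonal matrix with strictly positive entries (via the component structure and strict monotonicity of each $\sigma_{i}$), take its inverse as the metric tensor $\bm{G}$, and recognize (\ref{dxdt}) as Amari's natural gradient ODE $\dot{\bm{x}}=-\bm{G}^{-1}\nabla f$. Your added remarks on the almost-everywhere differentiability caveat go slightly beyond the paper's proof but do not change the argument.
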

\begin{proof}
Amari's natural gradient descent \cite{amari1998natural} describes the steepest descent of $f(\bm{x})$ on a Riemannian manifold $\mathcal{M}$ with metric tensor $\bm{G} \succ \bm{0}$, and is given by the recursion
\begin{eqnarray}
\bm{x}(k+1) = \bm{x}(k) - h\left(\bm{G}(\bm{x})\right)^{-1}\nabla f(\bm{x})\big\vert_{\bm{x}=\bm{x}(k)},
\label{AmariNatGradDescent}
\end{eqnarray}	
where the discrete iteration index $k\in\mathbb{N}$, and the step-size $h>0$ is small. For $h \downarrow 0$, we get the natural gradient dynamics
\begin{eqnarray}
\dot{\bm{x}} = -\left(\bm{G}(\bm{x})\right)^{-1}\nabla f(\bm{x}).
\label{AmariNatGradODE}
\end{eqnarray}
Notice that due to the assumptions listed on the activation function $\bm{\sigma}$, its Jacobian appearing in (\ref{dxdt}) is a diagonal matrix with positive entries.	Therefore, (\ref{dxdt}) is of the form (\ref{AmariNatGradODE}) with $\bm{G}(\bm{x}) = {\rm{diag}}(1/\sigma^{\prime}_{i}(\sigma_{i}^{-1}(x_{i})))$, $i=1,\hdots,n$. In particular, $\bm{G}(\bm{x})\succ\bm{0}$ for all $\bm{x}\in(0,1)^{n}$. This completes the proof.
\end{proof} 

Theorem \ref{ThmNatGradFlow} allows us to interpret the flow generated by (\ref{dxdt}) as the steepest descent of $f$ in a Riemannian manifold $\mathcal{M}$ with length element $\differential s$ given by
\begin{eqnarray}
\differential s^{2} = \displaystyle\sum_{i=1}^{n} \displaystyle\frac{\differential x_{i}^{2}}{\sigma^{\prime}_{i}(\sigma_{i}^{-1}(x_{i}))},
\label{lineelement}
\end{eqnarray}
i.e., an orthogonal coordinate system can be associated with $\mathcal{M}$. Further insights can be obtained by interpreting $\mathcal{M}$ as a Hessian manifold \cite{shima2007geometry}. To this end, we would like to view the positive definite metric tensor $\bm{G} = {\rm{diag}}(1/\sigma^{\prime}_{i}(\sigma_{i}^{-1}(x_{i})))$ as the Hessian of a (twice differentiable) strictly convex function. Such an identification will allow us to associate a mirror descent \cite{nemirovskii1979comp lexity,beck2003mirror} in the dual manifold  \cite{raskutti2015information} corresponding to the natural gradient descent in manifold $\mathcal{M}$. 
Before delving into mirror descent, we next point out that (\ref{lineelement}) allows us to compute geodesics on the manifold $\mathcal{M}$, which completes the natural gradient descent interpretation. 

\subsection{Geodesics}
The (minimal) geodesic curve $\bm{\gamma}(t)$ parameterized via $t\in[0,1]$ that connects $\bm{x},\bm{y}\in(0,1)^{n}$, solves the well-known Euler-Lagrange equation (see e.g., \cite[Ch. 3.2]{docarmo1992Riemannian})
\begin{eqnarray}
\ddot{\gamma}_{k}(t) + \displaystyle\sum_{i,j=1}^{n}\Gamma_{ij}^{k}(\bm{\gamma}(t)) \dot{\gamma}_{i}(t)\dot{\gamma}_{j}(t) = 0, \: k=1,\hdots,n,
\label{GeodesicELode}	
\end{eqnarray}
subject to the boundary conditions $\bm{\gamma}(0) = \bm{x}$, $\bm{\gamma}(1) = \bm{y}$. Here $\Gamma_{ij}^{k}$ denote the Cristoffel symbols of the second kind for $i,j,k=1,\hdots,n$, and are given by
\begin{eqnarray}
\Gamma_{ij}^{k} := \frac{1}{2}\displaystyle\sum_{l=1}^{n} g^{kl} \left(\dfrac{\partial g_{jl}}{\partial x_{i}} + \dfrac{\partial g_{il}}{\partial x_{j}} - \dfrac{\partial g_{ij}}{\partial x_{l}}\right),
\label{Christoffel2ndKind}	
\end{eqnarray}
wherein $g^{kl}$ denotes the $(k,l)$-th element of the inverse metric tensor $\bm{G}^{-1}$. For $i\neq j\neq k$, and diagonal $\bm{G}$, we get
\begin{subequations}\label{diagG}
\begin{align}
&\Gamma_{ij}^{k} = 0, \label{Gammaijk}\\
&\Gamma_{ii}^{k} = -\dfrac{1}{2}g^{kk} \dfrac{\partial g_{ii}}{\partial x_{k}}, \label{Gammaiik}\\
&\Gamma_{ik}^{k} = \dfrac{\partial}{\partial x_{i}} \left(\log\sqrt{|g_{kk}|}\right), \label{Gammaikk}\\
&\Gamma_{kk}^{k} = \dfrac{\partial}{\partial x_{k}} \left(\log\sqrt{|g_{kk}|}\right).	\label{Gammakkk}	
\end{align}
\end{subequations}
For our diagonal metric tensor (\ref{Mij}), since the $i$-th diagonal element of $\bm{G}$ only depends on $x_{i}$, the terms (\ref{Gammaijk})--(\ref{Gammaikk}) are all zero, i.e., the only non-zero Cristoffel symbols in (\ref{diagG}) are (\ref{Gammakkk}). This simplifies (\ref{GeodesicELode}) as a set of $n$ \emph{decoupled} ODEs:
\begin{align}
\ddot{\gamma}_{i}(t) + \Gamma_{ii}^{i}(\bm{\gamma}(t)) \left(\dot{\gamma}_{i}(t)\right)^{2} = 0, \quad i=1,\hdots,n.
\label{DecoupledGeodesicODE}	
\end{align}
Using (\ref{Mij}) and (\ref{Gammakkk}), for $i=1,\hdots,n$, we can rewrite (\ref{DecoupledGeodesicODE}) as
\begin{align}
\ddot{\gamma}_{i}(t) + \dfrac{\sigma_{i}^{\prime\prime}\left(\sigma_{i}^{-1}\left(\gamma_{i}\right)\right)\sigma_{i}^{\prime}\left(\gamma_{i}\right)}{2\:\sigma_{i}^{\prime}\left(\sigma_{i}^{-1}\left(\gamma_{i}\right)\right) \left(\sigma_{i}\left(\gamma_{i}\right)\right)^{2}} \left(\dot{\gamma}_{i}(t)\right)^{2} = 0,
\label{DecoupledExplicitODE}	
\end{align}
which solved together with the endpoint conditions $\bm{\gamma}(0) = \bm{x}$, $\bm{\gamma}(1) = \bm{y}$, yields the geodesic curve $\bm{\gamma}(t)$. In Section \ref{SectionExample}, we will see an explicitly solvable example for the system of nonlinear ODEs (\ref{DecoupledExplicitODE}). 

Once the geodesic $\bm{\gamma}(t)$ is obtained from (\ref{DecoupledExplicitODE}), the geodesic distance $d_{\bm{G}}(\bm{x},\bm{y})$ induced by the metric tensor $\bm{G}$, can be computed as
\begin{subequations}\label{GeodesicDist}	
\begin{align}
d_{\bm{G}}(\bm{x},\bm{y}) &:= \displaystyle\int_{0}^{1} \left(\displaystyle\sum_{i,j=1}^{n}g_{ij}(\bm{\gamma}(t))\dot{\gamma}_{i}(t)\dot{\gamma}_{j}(t)\right)^{\!\!1/2}{\rm{d}}t \label{GeodesicDistGeneral}\\
&= \displaystyle\int_{0}^{1} \left(\displaystyle\sum_{i=1}^{n}\dfrac{\left(\dot{\gamma}_{i}(t)\right)^{2}}{\sigma^{\prime}_{i}(\sigma_{i}^{-1}(\gamma_{i}(t)))}\right)^{\!\!1/2}{\rm{d}}t.\label{GeodesicDistSpecific}
\end{align}
\end{subequations}
This allows us to formally conclude that the flow generated by (\ref{dxdt}), or equivalently by (\ref{AmariNatGradODE}), can be seen as the gradient descent of $f$ on the manifold $\mathcal{M}$, measured w.r.t. the distance $d_{\bm{G}}$ given by (\ref{GeodesicDistSpecific}).

\begin{remark}\label{RiemannianGradient}
In view of the notation $d_{\bm{G}}$, we can define the Riemannian gradient $\nabla^{d_{\bm{G}}}\left(\cdot\right) := (\bm{G}(\bm{x}))^{-1}\nabla\left(\cdot\right)$, and rewrite the dynamics (\ref{AmariNatGradODE}) as $\dot{\bm{x}} = -\nabla^{d_{\bm{G}}}f(\bm{x})$.  	
\end{remark}


\section{HNN Flow as Mirror Descent}\label{SectionMirrorDescent}

We now provide an alternative way to interpret the flow generated by (\ref{dxdt}) as mirror descent on a Hessian manifold $\mathcal{N}$ that is dual to $\mathcal{M}$. For $\bm{x}\in\mathcal{M}$ and $\bm{z}\in\mathcal{N}$, consider the mapping $\mathcal{N} \mapsto \mathcal{M}$ given by the map $\bm{x} = \nabla_{\bm{z}} \psi(\bm{z})$ for some strictly convex and differentiable map $\psi(\cdot)$, i.e., 
\[(\nabla\psi)^{-1} : \mathcal{M} \mapsto \mathcal{N}.\] 
The map $\psi(\cdot)$ is referred to as the ``mirror map", formally defined below. In the following, the notation ${\rm{dom}}(\psi)$ stands for the domain of $\psi(\cdot)$.

\begin{definition} \label{DefMirrorMap}(\textbf{Mirror map})
A differentiable, strictly convex function $\psi : {\rm{dom}}(\psi) \mapsto \mathbb{R}$, on an open convex set ${\rm{dom}}(\psi) \subseteq \mathbb{R}^{n}$, satisfying $\parallel \nabla \psi\parallel_{2} \rightarrow\infty$ as the argument of $\psi$ approaches the boundary of the closure of ${\rm{dom}}(\psi)$, is called a mirror map.
\end{definition}
Given a mirror map, one can introduce a Bregman divergence \cite{bregman1967} associated with it as follows.
\begin{definition} \label{DefBregDiv}(\textbf{Bregman divergence associated with a mirror map})
For a mirror map $\psi$, the associated Bregman divergence $D_{\psi} : {\rm{dom}}(\psi) \times {\rm{dom}}(\psi) \mapsto \mathbb{R}_{\geq 0}$, is given by
\begin{align}
D_{\psi}\left(\bm{z}, \widetilde{\bm{z}}\right) := \psi(\bm{z}) - \psi(\widetilde{\bm{z}}) - (\bm{z} - \widetilde{\bm{z}})^{\top}\nabla\psi( \widetilde{\bm{z}}).
\label{BregDivFormula}
\end{align}
\end{definition}
Geometrically, $D_{\psi}\left(\bm{z}, \widetilde{\bm{z}}\right)$ is the error at $\bm{z}$ due to first order Taylor approximation of $\psi$ about $\widetilde{\bm{z}}$. It is easy to verify that $D_{\psi}\left(\bm{z}, \widetilde{\bm{z}}\right) = 0$ iff $\bm{z}=\widetilde{\bm{z}}$. However, the Bregman divergence is not symmetric in general, i.e., $D_{\psi}\left(\bm{z}, \widetilde{\bm{z}}\right) \neq D_{\psi}\left(\widetilde{\bm{z}},\bm{z}\right)$.

In the sequel, we assume that the mirror map $\psi$ is twice differentiable. Then, $\psi$ being strictly convex, its Hessian $\nabla^{2}\psi$ is positive definite. This allows one to think of $\mathcal{N}\equiv{\rm{dom}}(\psi)$ as a Hessian manifold with metric tensor $\bm{H}\equiv\nabla^{2}\psi$. The mirror descent for function $f$ on a convex set $\mathcal{Z} \subset \mathcal{N}$, is a recursion of the form
\begin{subequations}\label{MirrorDescent}
\begin{align}
\nabla\psi(\bm{y}(k+1)) &= \nabla\psi\left(\bm{z}(k)\right) - h\nabla f\left(\bm{z}(k)\right), \label{gradupdate}\\
\bm{z}(k+1) &= {\rm{proj}}^{D_{\psi}}_{\mathcal{Z}}\left(\bm{y}(k+1)\right), \label{varupdate}
\end{align}
\end{subequations}
where  $h>0$ is the step-size, and the Bregman projection
\begin{align}
{\rm{proj}}^{D_{\psi}}_{\mathcal{Z}}\left(\bm{\eta}\right) := \underset{\bm{\xi}\in\mathcal{Z}}{\arg\min}\, D_{\psi}\left(\bm{\xi}, \bm{\eta}\right).
\label{BregProj}
\end{align}
For background on mirror descent, we refer the readers to \cite{nemirovskii1979comp lexity,beck2003mirror}. For a recent reference, see \cite[Ch. 4]{bubeck2015}.

Raskutti and Mukherjee \cite{raskutti2015information} pointed out that if $\psi$ is twice differentiable, then the map $\mathcal{N}\mapsto\mathcal{M}$ given by $\bm{x} = \nabla_{\bm{z}}\psi(\bm{z})$, establishes a one-to-one correspondence between the natural gradient descent (\ref{AmariNatGradDescent}) and the mirror descent (\ref{MirrorDescent}). Specifically, the mirror descent (\ref{MirrorDescent}) on $\mathcal{Z} \subset \mathcal{N}$ associated with the mirror map $\psi$, is equivalent to the natural gradient descent (\ref{AmariNatGradDescent}) on $\mathcal{M}$ with metric tensor $\nabla^{2}\psi^{*}$, where 
\begin{align}
\psi^{*}\left(\bm{x}\right) := \underset{\bm{z}}{\sup}\left(\bm{x}^{\top}\bm{z} \:-\: \psi(\bm{z})\right),\label{LegFen}
\end{align}
is the Legendre-Fenchel conjugate \cite[Section 12]{rockafeller1970} of $\psi(\bm{z})$. In our context, the Hessian $\nabla^{2}\psi^{*}\equiv\bm{G}$ is given by (\ref{Mij}). To proceed further, the following Lemma will be useful.
\begin{lemma}\label{LemmaDuality} (\textbf{Duality})
Consider a mirror map $\psi$ and its associated Bregman divergence $D_{\psi}$ as in Definitions \ref{DefMirrorMap} and \ref{DefBregDiv}. It holds that 
\begin{subequations}\label{Duality} 
\begin{flalign}
&\nabla\psi^{*} = \left(\nabla\psi\right)^{-1}, \label{StarInv}\\
&D_{\psi}\left(\bm{z},\widetilde{\bm{z}}\right) = D_{\psi^{*}}\left(\nabla\psi(\widetilde{\bm{z}}),\nabla\psi(\bm{z})\right). \label{BregStar}
\end{flalign}
\end{subequations}
\end{lemma}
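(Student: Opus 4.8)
The plan is to derive both identities from the classical theory of convex functions of Legendre type. First I would observe that the hypotheses placed on the mirror map $\psi$ in Definition \ref{DefMirrorMap} — an open convex effective domain, strict convexity, differentiability on that domain, and $\parallel\nabla\psi\parallel_{2}\to\infty$ as the argument approaches the boundary — are precisely the conditions making $\psi$ essentially smooth and essentially strictly convex, i.e., a convex function of Legendre type in the sense of \cite[Section 26]{rockafeller1970}. The payoff of this observation is threefold: $\nabla\psi$ is a bijection (indeed a homeomorphism) from ${\rm{dom}}(\psi)$ onto ${\rm{dom}}(\psi^{*})$; the conjugate $\psi^{*}$ defined in (\ref{LegFen}) is again of Legendre type, hence differentiable on its own domain; and $(\psi^{*})^{*}=\psi$. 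These are exactly the facts that make the right-hand side of (\ref{BregStar}) well defined to begin with.

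For (\ref{StarInv}), fix $\bm{x}=\nabla\psi(\bm{z})$ for some $\bm{z}\in{\rm{dom}}(\psi)$. The map $\bm{w}\mapsto\bm{x}^{\top}\bm{w}-\psi(\bm{w})$ is strictly concave and its gradient vanishes precisely at $\bm{w}=\bm{z}$, so the supremum in (\ref{LegFen}) is attained uniquely there; a Danskin/envelope argument then yields $\nabla\psi^{*}(\bm{x})=\bm{z}$. Equivalently, one may differentiate the Fenchel--Young equality
\[ \psi(\bm{z}) + \psi^{*}(\nabla\psi(\bm{z})) = \bm{z}^{\top}\nabla\psi(\bm{z}), \]
which holds identically on ${\rm{dom}}(\psi)$, and simplify via the chain rule. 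Either route gives $\nabla\psi^{*}=(\nabla\psi)^{-1}$.

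For (\ref{BregStar}), the computation is purely algebraic once (\ref{StarInv}) is in hand. I would write out $D_{\psi^{*}}\!\left(\nabla\psi(\widetilde{\bm{z}}),\nabla\psi(\bm{z})\right)$ directly from (\ref{BregDivFormula}): it equals $\psi^{*}(\nabla\psi(\widetilde{\bm{z}})) - \psi^{*}(\nabla\psi(\bm{z})) - \left(\nabla\psi(\widetilde{\bm{z}}) - \nabla\psi(\bm{z})\right)^{\top}\nabla\psi^{*}(\nabla\psi(\bm{z}))$. Now I would replace $\nabla\psi^{*}(\nabla\psi(\bm{z}))$ by $\bm{z}$ using (\ref{StarInv}), and eliminate the two $\psi^{*}$ terms with the Fenchel--Young equality evaluated at $\bm{z}$ and at $\widetilde{\bm{z}}$, i.e., $\psi^{*}(\nabla\psi(\bm{z}))=\bm{z}^{\top}\nabla\psi(\bm{z})-\psi(\bm{z})$ and similarly for $\widetilde{\bm{z}}$. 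The bilinear terms cancel, and what survives is exactly $\psi(\bm{z})-\psi(\widetilde{\bm{z}})-(\bm{z}-\widetilde{\bm{z}})^{\top}\nabla\psi(\widetilde{\bm{z}})$, which is $D_{\psi}(\bm{z},\widetilde{\bm{z}})$ by (\ref{BregDivFormula}).

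The only genuine obstacle is the first step: verifying that the mirror-map axioms really do force $\psi$ into the Legendre class, so that $\psi^{*}$ is differentiable — not merely subdifferentiable — on all of ${\rm{dom}}(\psi^{*})$ and $\nabla\psi$ possesses a differentiable global inverse. This is precisely where the boundary blow-up condition in Definition \ref{DefMirrorMap} is indispensable, as it rules out a nonsmooth ``kink'' in the conjugate; everything after that is bookkeeping with the Fenchel--Young equality and requires no further analytic input.
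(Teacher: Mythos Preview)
Your argument is correct and is the standard Legendre-duality computation. The paper itself does not supply a proof at all: its ``proof'' consists solely of the pointer ``See for example, \cite[Section 2.2]{nielsen2007bregman}.'' So there is no approach in the paper to compare against; your write-up is a self-contained derivation of what the authors simply cite. In particular, your care in first pinning down that the mirror-map axioms of Definition~\ref{DefMirrorMap} place $\psi$ in the Legendre class (so that $\psi^{*}$ is differentiable and $\nabla\psi$ is globally invertible) is a point the paper leaves entirely to the reference, and your Fenchel--Young bookkeeping for (\ref{BregStar}) is exactly the computation one finds in that literature.
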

\begin{proof}
See for example, \cite[Section 2.2]{nielsen2007bregman}.
\end{proof}

Combining (\ref{Mij}) with (\ref{StarInv}), we get 
\begin{align}
z_{i} &= \left((\nabla\psi)^{-1}(\bm{x})\right)_{i} \nonumber\\
&= \frac{\partial\psi^{*}}{\partial x_{i}} = \underbrace{\int\frac{{\mathrm{d}}x_{i}}{\sigma^{\prime}_{i}(\sigma_{i}^{-1}(x_{i}))}}_{=: \phi(x_{i})} \:+\: \underbrace{\vphantom{\int\frac{{\mathrm{d}}x_{i}}{\sigma^{\prime}_{i}(\sigma_{i}^{-1}(x_{i}))}} k_{i}\left(\bm{x}_{-i}\right)}_{\substack{\text{arbitrary function that}\\ \text{does not depend on $x_{i}$}}},
\label{zasfuncofx}
\end{align} 
for $i=1,\hdots,n$, where the notation $k_{i}\left(\bm{x}_{-i}\right)$ means that the function $k_{i}(\cdot)$ does not depend on the $i$-th component of the vector $\bm{x}$. Therefore, the map $\mathcal{M} \mapsto \mathcal{N}$ associated with the HNN flow is of the form (\ref{zasfuncofx}). 

To derive the mirror descent (\ref{MirrorDescent}) associated with the HNN flow, it remains to determine $D_{\psi}$. Thanks to (\ref{BregStar}), we can do so without explicitly computing $\psi$. Specifically, integrating (\ref{zasfuncofx}) yields $\psi^{*}$, i.e.,
\begin{align}
\psi^{*}(\bm{x}) = \int\phi(x_{i}){\mathrm{d}}x_{i} + \kappa\prod_{i=1}^{n}x_{i} + c,
\label{psistarGeneralForm}
\end{align} 
where $\phi(\cdot)$ is defined in (\ref{zasfuncofx}), and $\kappa, c$ are constants. The constants $\kappa, c$ define a parametrized family of maps $\psi^{*}$. Combining (\ref{BregDivFormula}) and (\ref{psistarGeneralForm}) yields $D_{\psi^{*}}$, and hence $D_{\psi}$ (due to (\ref{BregStar})). In Section \ref{SectionExample}, we illustrate these ideas on a concrete example.
 
 \begin{remark}
 We note that one may rewrite (\ref{HNNivp}) in the form of a mirror descent ODE \cite[Section 2.1]{krichene2015accelerated}, given by
 \begin{align}
 \dot{\bm{x}}_{\rm{H}} = -\nabla f(\bm{x}), \quad \bm{x} = \nabla\psi^{*}\left(\bm{x}_{\rm{H}}\right),
 \label{MirrorDescentODE}	
 \end{align}
for some to-be-determined mirror map $\psi$, thereby interpreting (\ref{HNNivp}) as mirror descent in $\mathcal{M}$ itself, with $\bm{x}_{\rm{H}}\in\mathbb{R}^{n}$ as the dual variable, and $\bm{x}\in[0,1]^{n}$ as the primal variable. This interpretation, however, is contingent on the assumption that the monotone vector function $\bm{\sigma}$ is expressible as the gradient of a convex function (here, $\psi^{*}$), for which the necessary and sufficient condition is that the map $\bm{\sigma}$ be maximally cyclically monotone \cite[Section 2, Corollary 1 and 2]{rockafellar1966characterization}. This indeed holds
under the structural assumption (\ref{componentStructure}). 
\end{remark}


\begin{figure}[t]
\centering
\includegraphics[width=\linewidth]{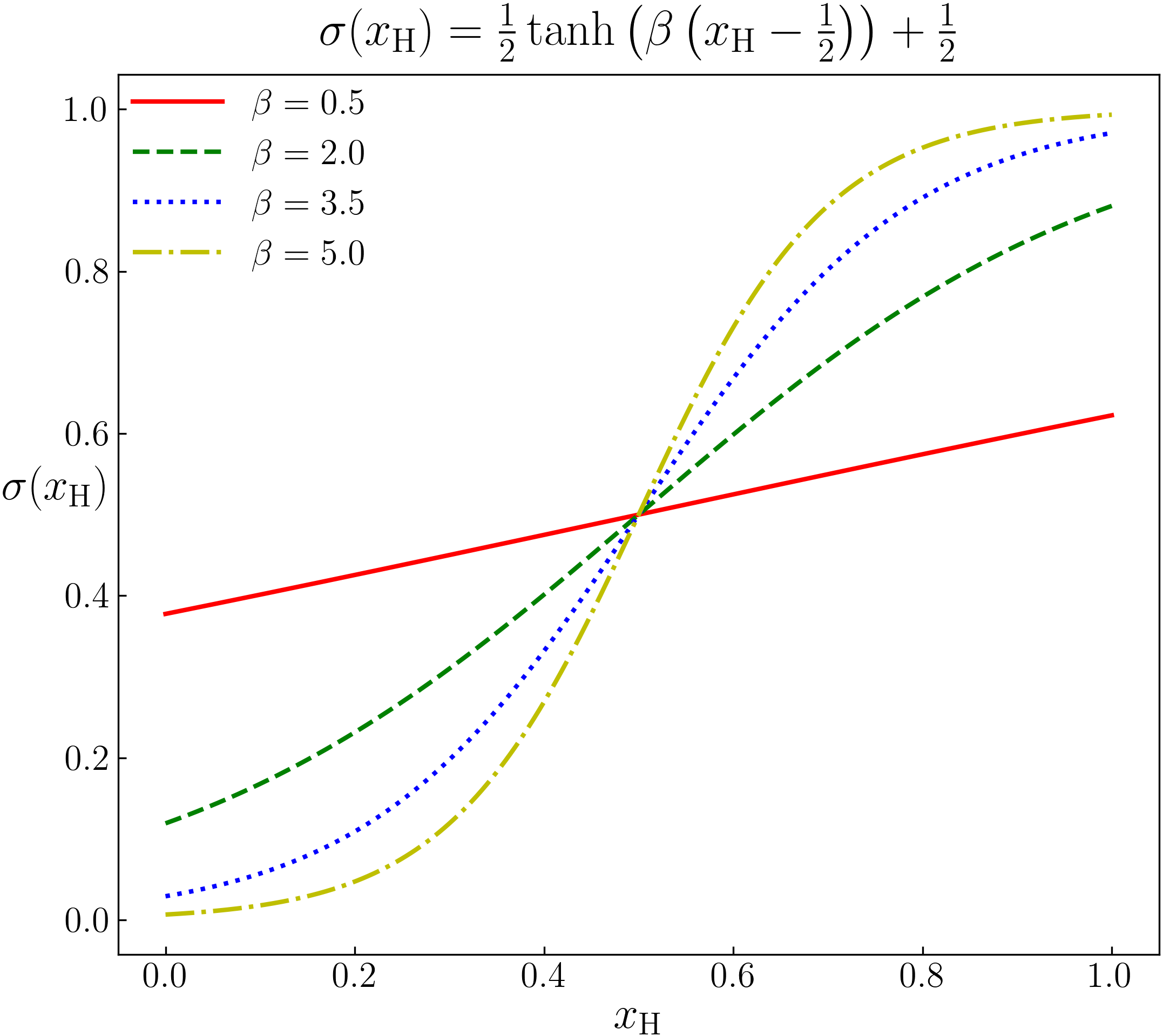}
\vspace*{-0.1in}
\caption{\small{For $x_{\rm{H}}\in[0,1]$, the activation function $\sigma(x_{\rm{H}})$ associated with the soft-projection operator (\ref{softproj}).}}
\vspace*{-0.1in}
\label{SoftProjActFunc}
\end{figure}

\section{Examples}\label{SectionExample}
In this Section, we first give a simple example (Section \ref{SubsecIllustrativeExample}) to elucidate the ideas presented in Sections \ref{SectionNatGradDescent} and \ref{SectionMirrorDescent}. Then, we provide a case study (Section \ref{SubsecCaseStudy}) to exemplify their application in the context of an engineering problem.

\subsection{An Illustrative Example}\label{SubsecIllustrativeExample}

Consider the activation function $\bm{\sigma} : \mathbb{R}^{n} \mapsto [0,1]^{n}$ given by the soft-projection operator
\begin{eqnarray}
\sigma_{i}(x_{{\rm{H}}_{i}}) := \frac{1}{2}\:\tanh\left(\beta_{i}\left(x_{{\rm{H}}_{i}} - \frac{1}{2}\right)\right) + \frac{1}{2}, \quad \beta_{i} > 0;
\label{softproj}
\end{eqnarray}
see Fig. \ref{SoftProjActFunc}. We next illustrate the natural gradient descent and the mirror descent interpretations of (\ref{dxdt}) with activation function (\ref{softproj}).

\subsubsection{Natural Gradient Descent Interpretation}
Direct calculations yield 
\begin{eqnarray}
\sigma^{\prime}_{i}(\sigma_{i}^{-1}(x_{i})) = \frac{1}{2}\beta_{i}\sech^{2} \left(\tanh^{-1}(2x_{i}-1)\right),
\label{JacDiagExample}
\end{eqnarray}
for all $i=1,\hdots,n$. Notice that $x_{i} \in [0,1]$ implies $(2x_{i}-1)\in[-1,1]$, and hence the RHS of (\ref{JacDiagExample}) is positive. We have
\begin{align}
&(\nabla^{2}\psi^{*})_{ii}=g_{ii} = 1/\sigma^{\prime}_{i}(\sigma_{i}^{-1}(x_{i})) \nonumber\\
&= \frac{2}{\beta_{i}}\cosh^{2} \left(\tanh^{-1}(2x_{i}-1)\right) = \frac{2}{\beta_{i}} \frac{1}{1 - (1-2x_{i})^{2}},
\label{HessianExample}
\end{align}
which allows us to deduce the following: the HNN flow with activation function (\ref{softproj}) is the natural gradient descent (\ref{AmariNatGradDescent}) with diagonal metric tensor $\bm{G}$ given by (\ref{HessianExample}), i.e., $g_{ii}=1/\left(2\beta_{i}x_{i}(1-x_{i})\right)$, $i=1,\hdots,n$.


\begin{figure}[t]
\centering
\includegraphics[width=\linewidth]{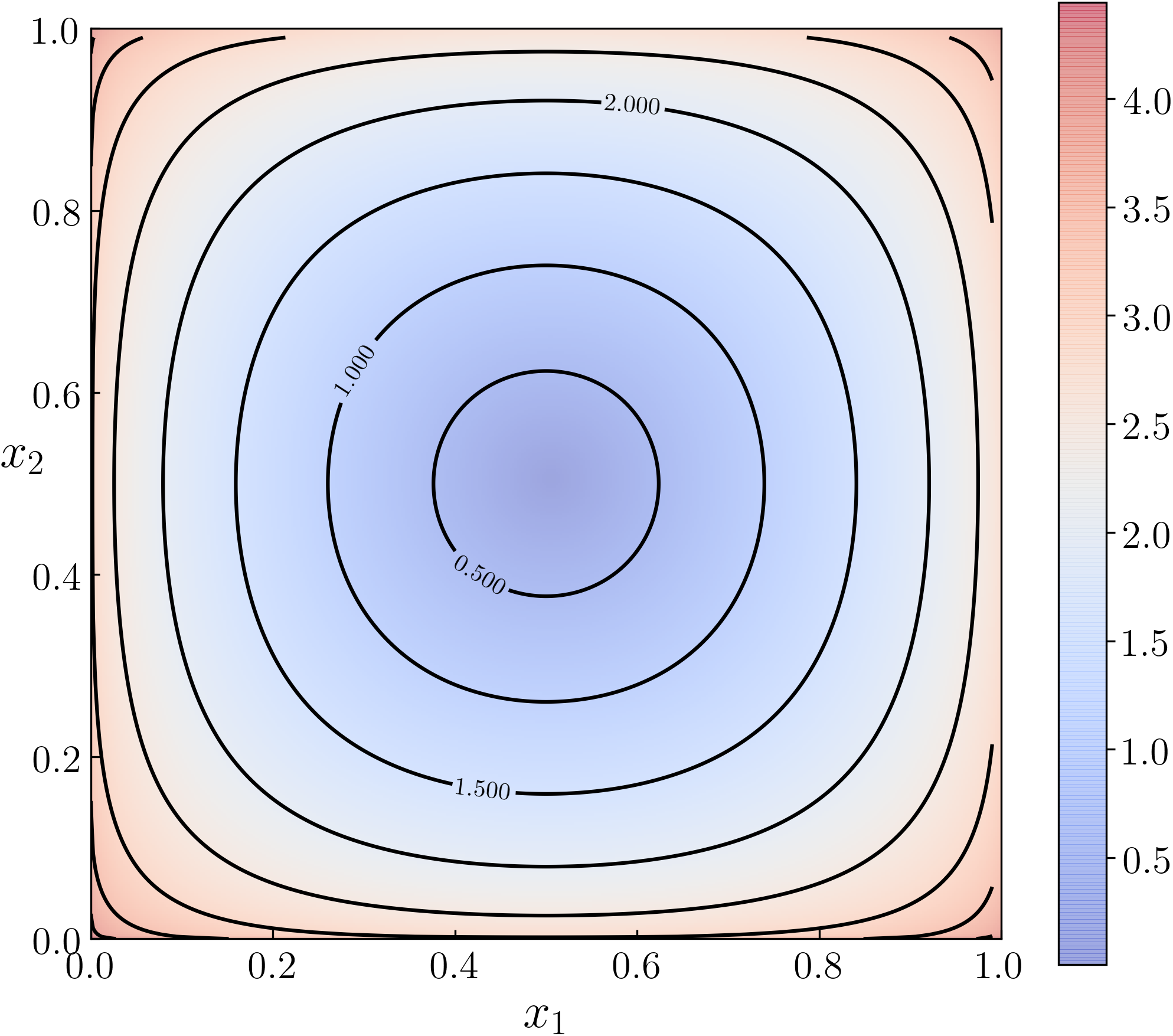}
\vspace*{-0.1in}
\caption{\small{For $\bm{x}\in[0,1]^{2}$, shown above are the contour plots for the geodesic balls of $d_{\bm{G}}$ given by (\ref{GeodesicDistSoftProj}) with $\beta_{i}=1/4$ for $i=1,2$, centered at $(0.5,0.5)$.}}
\vspace*{-0.24in}
\label{FigGeodesicBall}
\end{figure}


In this case, (\ref{DecoupledExplicitODE}) reduces to the nonlinear ODE
\begin{align}
\ddot{\gamma}_{i} + \dfrac{2\gamma_{i} - 1}{2\gamma_{i}\left(1-\gamma_{i}\right)}\left(\dot{\gamma}_{i}\right)^{2} = 0,	
\label{GeodesicODESoftProj}	
\end{align} 
which solved with the boundary conditions $\gamma_{i}(0) = x_{i}$, $\gamma_{i}(1)=y_{i}$, $i=1,\hdots,n$, determines the geodesic curve $\bm{\gamma}(t)$ connecting $\bm{x},\bm{y}\in(0,1)^{n}$. Introducing the change-of-variable $v_{i} := \dot{\gamma}_{i}$, noting that $\ddot{\gamma}_{i} = v_{i}\dfrac{{\rm{d}}v_{i}}{{\rm{d}}\gamma_{i}}$, and enforcing $v_{i} \not\equiv 0$, we can transcribe (\ref{GeodesicODESoftProj}) into the separable ODE
\begin{align}
\dfrac{{\rm{d}}v_{i}}{{\rm{d}}\gamma_{i}} + \dfrac{2\gamma_{i} - 1}{2\gamma_{i}\left(1-\gamma_{i}\right)}v_{i} = 0,
\label{separableODE}	
\end{align}
which gives 
\begin{align}
v_{i} \equiv \left(\dot{\gamma}_{i}\right)^{2} = a_{i}\gamma_{i}\left(1-\gamma_{i}\right),
\label{gammadotSquared}	
\end{align}
and consequently,  
\begin{align}
\gamma_{i}(t) = \sin^{2}\left(\frac{1}{2}\left(\sqrt{a_{i}}\:t + b_{i}\right)\right), \quad k=1,\hdots,n,
\label{gammaWithConstantOfIntegration}	
\end{align}
where $\{\left(a_{i},b_{i}\right)\}_{i=1}^{n}$ are constants of integration. Using the boundary conditions $\gamma_{i}(0) = x_{i}$, $\gamma_{i}(1)=y_{i}$ in (\ref{gammaWithConstantOfIntegration}) yields the geodesic curve $\bm{\gamma}(t)$ as
\begin{align}
\gamma_{i}(t) = \sin^{2}\left(\left(1-t\right)\arcsin\sqrt{x_{i}} + t\arcsin\sqrt{y_{i}}\right), 
\label{GeodesicSoftProj}	
\end{align}
where $i=1,\hdots,n$, and $t\in[0,1]$. From (\ref{GeodesicSoftProj}), it is easy to verify that $\bm{\gamma}(t)$ is component-wise in $[0,1]$, and satisfies the boundary conditions $\bm{\gamma}(0) = \bm{x}$, $\bm{\gamma}(1) = \bm{y}$. From (\ref{GeodesicDistSpecific}) and (\ref{GeodesicSoftProj}), the geodesic distance associated with the activation function (\ref{softproj}) is
\begin{align}
d_{\bm{G}}\left(\bm{x},\bm{y}\right) = \parallel  \left(\arcsin\sqrt{\bm{x}} - \arcsin\sqrt{\bm{y}}\right)\oslash \bm{\beta} \parallel_{2},\label{GeodesicDistSoftProj}	
\end{align}
where all vector operands such as square-root, arcsin, division (denoted by $\oslash$), are element-wise. The contour plots in Fig. \ref{FigGeodesicBall} show the geodesic balls of $d_{\bm{G}}$ given by (\ref{GeodesicDistSoftProj}), centered at $(0.5,0.5)$. We summarize: the HNN flow generated by (\ref{dxdt}) with activation function (\ref{softproj}) is natural gradient descent of $f$ measured w.r.t. the geodesic distance (\ref{GeodesicDistSoftProj}).

\subsubsection{Mirror Descent Interpretation}

To derive a mirror descent interpretation, integrating (\ref{HessianExample}) we obtain
\begin{align}
z_{i}=\frac{\partial \psi^{*}}{\partial x_{i}} =  \frac{1}{2\beta_{i}} \left(\log x_{i} - \log(1-x_{i})\right) + k_{i}\left(\bm{x}_{-i}\right),
\label{GradFuncExample}
\end{align}
where $i=1,\hdots,n$, and as before, $k_{i}\left(\bm{x}_{-i}\right)$ means that the function $k_{i}(\cdot)$ does not depend on the $i$-th component of the vector $\bm{x}$. Equation (\ref{GradFuncExample}) implies that
\begin{align}
\psi^{*}(\bm{x}) = \displaystyle\sum_{i=1}^{n}\frac{1}{2\beta_{i}}\left(x_{i}\log x_{i} + (1-x_{i})\log(1-x_{i})\right) \nonumber\\
+ \kappa\prod_{i=1}^{n}x_{i} + c,
\label{FuncExample}
\end{align}
where $\kappa, c$ are constants. Therefore, (\ref{GradFuncExample})  can be re-written as
\begin{align}
z_{i}=\frac{\partial \psi^{*}}{\partial x_{i}} =  \frac{1}{2\beta_{i}} \left(\log x_{i} - \log(1-x_{i})\right) + \kappa\prod_{\stackrel{j=1}{j\neq i}}^{n}x_{j}.
\label{GradFuncExampleFinal}
\end{align}
Comparing the RHS of (\ref{GradFuncExampleFinal}) with (\ref{zasfuncofx}), for this specific example, we have that
\begin{align}
\phi(x_{i}) = \frac{1}{2\beta_{i}}\log\left(\frac{x_{i}}{1-x_{i}}\right), \quad k_{i}(\bm{x}_{-i}) = \kappa\prod_{\stackrel{j=1}{j\neq i}}^{n}x_{j}.
\label{phiandk}
\end{align}


Let us consider a special case of (\ref{FuncExample}), namely $\kappa=c=0$, that is particularly insightful. In this case, 
\[\psi^{*}(\bm{x}) = \sum_{i=1}^{n}\frac{1}{2\beta_{i}}(x_{i}\log x_{i} + (1-x_{i})\log(1-x_{i})),\] 
i.e., a separable (weighted) sum of the bit entropy (see \cite[Table 1]{nielsen2007bregman} and \cite[Table 1]{banerjee2005clustering}). The associated Bregman divergence
\begin{align}
D_{\psi^{*}}(\bm{\xi},\bm{\eta}) \!= \!\!\sum_{i=1}^{n}\frac{1}{2\beta_{i}}\bigg\{\xi_{i}\log\left(\frac{\xi_{i}}{\eta_{i}}\right) \!+\! \left(1-\xi_{i}\right)\log\left(\frac{1-\xi_{i}}{1-\eta_{i}}\right)\bigg\}
\label{LogisticLossBreg}
\end{align}
can be interpreted as the weighted sum of logistic loss functions associated with the Bernoulli random variables. Furthermore, $z_{i} = \log(x_{i}/(1-x_{i}))/2\beta_{i}$ yields
\begin{align}
x_{i} = \frac{\exp(2\beta_{i}z_{i})}{1 + \exp(2\beta_{i}z_{i})}.
\label{xasfuncofz}
\end{align}
Consequently, using (\ref{BregStar}) and (\ref{xasfuncofz}), we get 
\begin{align}
&D_{\psi}\left(\bm{z},\widetilde{\bm{z}}\right) = D_{\psi^{*}}\left(\widetilde{\bm{x}},\bm{x}\right) \nonumber\\
&= \sum_{i=1}^{n}\frac{1}{2\beta_{i}}\log\left(\frac{1 + \exp(2\beta_{i}z_{i})}{1 + \exp(2\beta_{i}\widetilde{z}_{i})}\right) - \left(z_{i} - \widetilde{z}_{i}\right)\frac{\exp(2\beta_{i}\widetilde{z}_{i})}{1 + \exp(42\beta_{i}\widetilde{z}_{i})},
\label{BregExampleSplCase}
\end{align}
which is the dual Logistic loss \cite[Table 1]{nielsen2007bregman}, and the corresponding mirror map can be identified as
\begin{align}
\psi(\bm{z}) = \sum_{i=1}^{n}\log\left(1 + \exp(2\beta_{i}z_{i})\right).
\label{dualbitentropy}
\end{align}
Thus, an alternative interpretation of the HNN dynamics with activation function (\ref{softproj}) is as follows: it can be seen as mirror descent of the form (\ref{MirrorDescent}) on $\mathcal{Z}\equiv\mathbb{R}^{n}$ in variables $z_{i}$ given by (\ref{GradFuncExampleFinal}); a candidate mirror map is given by (\ref{dualbitentropy}) with associated Bregman divergence (\ref{BregExampleSplCase}).


\subsection{Case Study: Economic Load Dispatch Problem}\label{SubsecCaseStudy}
We now apply the HNN gradient descent to solve an economic load dispatch problem in power system -- a context in which HNN methods have been used before \cite{travacca2018dual,park1993economic,king1995optimal,lee1998adaptive}. Our objective, however, is to illustrate the geometric ideas presented herein. 

Specifically, we consider $n_{\text{G}}$ generators which at time $t=0$ are either ON or OFF, denoted via state vector $\bm{x}_{0}\in\{0,1\}^{n_{\text{G}}}$. For $i=1,\hdots,n_{\text{G}}$, if the $i$\textsuperscript{th} generator is ON (resp. OFF) at $t=0$, then the $i$\textsuperscript{th} component of $\bm{x}_{0}$ equals 1 (resp. 0). Let $\bm{y}_{0}\in[0,1]^{n_{\text{G}}}$ be the vector of the generators' power output magnitudes at $t=0$. A dispatcher simultaneously decides for the next time step $t=1$, which of the generators should be ON or OFF ($\bm{x}\in\{0,1\}^{n_{\text{G}}}$) and the corresponding power output ($\bm{y}\in[0,1]^{n_{\text{G}}}$) in order to satisfy a known power demand $\pi_{d}>0$ while minimizing a convex cost of the form 
\begin{align}
J(\bm{x},\bm{y}) := \bm{p}^{\top}\bm{y} + \frac{1}{2}c_{1}\|\bm{x} - \bm{x}_{0}\|_{2}^{2} + \frac{1}{2}c_{2}\|\bm{y} - \bm{y}_{0}\|_{2}^{2}.
\label{CaseStudyCostFn}	
\end{align}
Here, $\bm{p}$ denotes the (known) marginal production cost of the generators; the second summand in (\ref{CaseStudyCostFn}) models the symmetric cost for switching the generator states from ON to OFF or from OFF to ON; the last summand in (\ref{CaseStudyCostFn}) models the generator ramp-up or ramp-down cost. The weights $c_{1},c_{2}>0$ are known to the dispatcher. The optimization problem the dispatcher needs to solve (over single time step) reads
\begin{subequations}
\begin{align}
&\underset{\substack{\bm{x}\in\{0,1\}^{n_{\text{G}}}\\ \bm{y}\in[0,1]^{n_{\text{G}}}}}{\text{minimize}}\quad J(\bm{x},\bm{y})\label{EDPobjective}\\
&\text{subject to}\quad \bm{x}^{\top}\bm{y} = \pi_{d},\label{EDPconstraint1}\\
&\qquad\qquad\quad\bm{1}^{\top}\bm{y} = \pi_{d}.\label{EDPconstraint2}
\end{align}
\label{CaseStudyOptProblem}		
\end{subequations}
The constraint (\ref{EDPconstraint2}) enforces that the supply equals the demand; the constraint (\ref{EDPconstraint1}) ensures that only those generators which are ON, can produce electricity. 

To solve the Mixed Integer Quadratically Constrained Quadratic Programming (MIQCQP) problem (\ref{CaseStudyOptProblem}), we construct the augmented Lagrangian 
\begin{align}
&\mathscr{L}_{r}\left(\bm{x},\bm{y},\lambda_{1},\lambda_{2}\right) := J(\bm{x},\bm{y}) + \lambda_{1}\left(\bm{x}^{\top}\bm{y} - \pi_{d}\right) \nonumber\\
&+ \lambda_{2}\left(\bm{1}^{\top}\bm{y} - \pi_{d}\right) + \frac{r}{2}\bigg\{\left(\bm{x}^{\top}\bm{y} - \pi_{d}\right)^{2} + \left(\bm{1}^{\top}\bm{y} - \pi_{d}\right)^{2}\bigg\},
\label{AugmentedLagrangian}	
\end{align}
where the parameter $r>0$, and the dual variables $\lambda_{1},\lambda_{2}\in\mathbb{R}$. Then, the augmented Lagrange dual function
\begin{align}
g_{r}\left(\lambda_{1},\lambda_{2}\right) = 
\underset{\substack{\bm{x}\in\{0,1\}^{n_{\text{G}}}\\ \bm{y}\in[0,1]^{n_{\text{G}}}}}{\text{minimize}}\quad \mathscr{L}_{r}\left(\bm{x},\bm{y},\lambda_{1},\lambda_{2}\right).
\label{AugmentedLagrangeDualFn}
\end{align}
For some initial guess of the pair $(\lambda_{1},\lambda_{2})$, we apply the HNN natural gradient descent (\ref{AmariNatGradDescent}) to perform the minimization (\ref{AugmentedLagrangeDualFn}), and then do the dual ascent updates 
\begin{subequations}
	\begin{align}
\lambda_{1} &\mapsfrom \lambda_{1} + h_{\text{dual}} \left(\bm{x}^{\top}\bm{y} - \pi_{d}\right),\\
\lambda_{2} &\mapsfrom \lambda_{2} + h_{\text{dual}}	\left(\bm{1}^{\top}\bm{y} - \pi_{d}\right), 
\end{align}
\end{subequations}
where $h_{\text{dual}}>0$ is the step size for the dual updates. Specifically, for the iteration index $k=0,1,2,...$, we perform the recursions
\begin{subequations}
\begin{align}
\left(\bm{x}(k),\bm{y}(k)\right) &= \underset{\substack{\bm{x}\in\{0,1\}^{n_{\text{G}}}\\ \bm{y}\in[0,1]^{n_{\text{G}}}}}{\arg\min}\quad \mathscr{L}_{r}\left(\bm{x},\bm{y},\lambda_{1}(k),\lambda_{2}(k)\right),\label{xyupdate}\\
\lambda_{1}(k+1) &= \lambda_{1}(k) + h_{\text{dual}} \left(\bm{x}(k)^{\top}\bm{y}(k) - \pi_{d}\right),\label{lambda1update}\\
\lambda_{2}(k+1) &= \lambda_{2}(k) + h_{\text{dual}}	\left(\bm{1}^{\top}\bm{y}(k) - \pi_{d}\right),\label{lambda2update} 	
\end{align}
\label{DualAscentRecursion}	
\end{subequations}
where (\ref{xyupdate}) is solved via Hopfield sub-iterations with step size $h_{\text{Hopfield}}>0$. In \cite{travacca2018dual}, this algorithm was referred to as the \emph{dual Hopfield method}, and was shown to have superior numerical performance compared to other standard approaches such as semidefinite relaxation.


\begin{figure}[t]
\centering
\includegraphics[width=\linewidth]{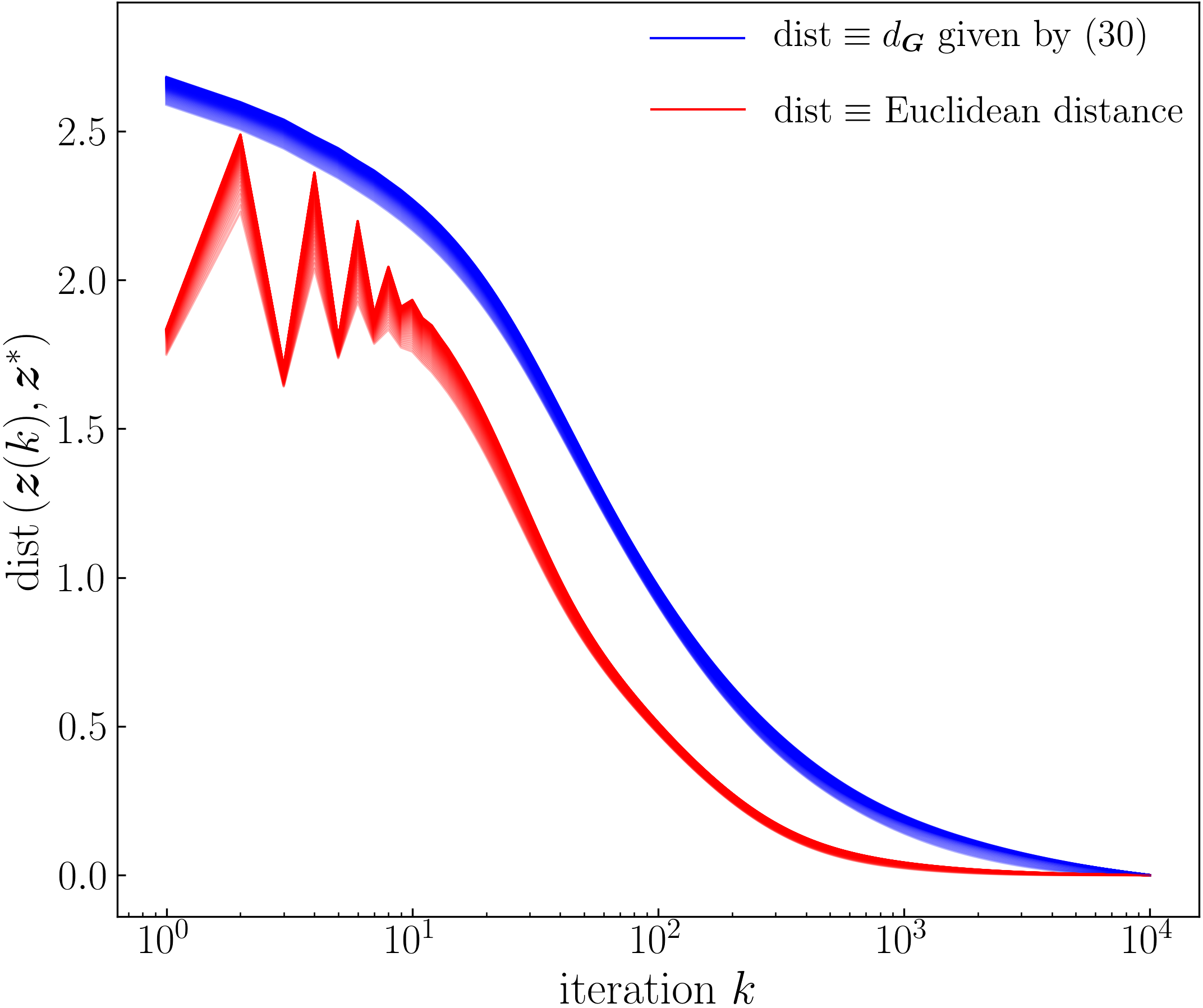}
\vspace*{-0.1in}
\caption{\small{For $\bm{z}:=\left(\bm{x};\bm{y}\right) \in \mathbb{R}^{2n_{\text{G}}}$, and $k\in\mathbb{N}$, the distance to minimum ${\rm{dist}}(\bm{z}(k),\bm{z}^{*})$ obtained from the dual Hopfield recursion (\ref{DualAscentRecursion}) applied to the economic load dispatch problem in Section \ref{SubsecCaseStudy}. Results are shown for 100 Monte Carlo simulations and two types of distances: the geodesic distance $d_{\bm{G}}$ given by (\ref{GeodesicDistSoftProj}) (depicted in \emph{blue}), and the Euclidean distance (depicted in \emph{red}).}}
\vspace*{-0.15in}
\label{CaseStdyConvergence}
\end{figure}


For numerical simulation, we implement (\ref{DualAscentRecursion}) with $n_{\text{G}} = 40$ generators, uniform random vector $\bm{p}\in [0,1]^{n_{\text{G}}}$, the pair $(c_{1},c_{2})$ uniform random in $(0,1]^{2}$, $r=1$, $h_{\text{Hopfield}} = 10^{-2}$, and $h_{\text{dual}} = 10^{-1}$. The vectors $\bm{x}_{0}$ and $\bm{y}_{0}$ in (\ref{CaseStudyCostFn}) are chosen as follows: $\bm{x}_{0}$ is chosen to be uniform random in $[0,1]^{n_{\text{G}}}$ and then element-wise rounded to the nearest integers; $\bm{y}_{0}$ is chosen uniform random in $[0,1]^{n_{\text{G}}}$ and then element-wise multiplied with $\bm{x}_{0}$. For $u$ chosen uniform random in $[0,1]$, we set $\pi_{d}=\left(1 + 0.1(u - 0.5)\right)\bm{1}^{\top}\bm{y}_{0}$. The activation functions are as in (\ref{softproj}) with $\bm{\beta}=\bm{1}$. The Hopfield sub-iterations were run until error tolerance $10^{-7}$ was achieved with maximum number of sub-iterations being $10^{4}$.   

We fix the problem data generated as above, and perform 100 Monte Carlo simulations, each solving an instance of the dual Hopfield method for the same problem data with randomly chosen $(\lambda_{1}(0),\lambda_{2}(0))$. Fig. \ref{CaseStdyConvergence} shows the convergence trends for the joint vector $\bm{z}:=\left(\bm{x};\bm{y}\right) \in \mathbb{R}^{2n_{\text{G}}}$ associated with the Monte Carlo simulations by plotting the distance between the current iterate $\bm{z}(k)$ and the converged vector $\bm{z}^{*}$ over iteration index $k\in\mathbb{N}$. In particular, Fig. \ref{CaseStdyConvergence} highlights how the geometry induced by the HNN governs the convergence trend: the geodesic distance to minimum $d_{\bm{G}}\left(\bm{z}(k),\bm{z}^{*}\right)$ (\emph{blue} curves in Fig. \ref{CaseStdyConvergence}), where $d_{\bm{G}}$ is given by (\ref{GeodesicDistSoftProj}), decays monotonically, as expected in gradient descent. However, $\|\bm{z}(k)-\bm{z}^{*}\|_{2}$ (\emph{red} curves in Fig. \ref{CaseStdyConvergence}) does not decay monotonically since the Euclidean distance does not encode the Riemannian geometry of the HNN recursion (\ref{AmariNatGradDescent}).


\section{Stochastic HNN Flow As Gradient Descent In the Space of Probability Measures}\label{SectionStocHNN}
We next consider the stochastic HNN flow, also known as the ``diffusion machine" \cite[Section 4]{wong1991stochastic}, given by the It\^{o} stochastic differential equation (SDE)
\begin{align}
{\rm{d}}\bm{x} = &\bigg\{-\left(\bm{G}(\bm{x})\right)^{-1}\nabla f(\bm{x}) + T\nabla\left(\left(\bm{G}(\bm{x})\right)^{-1}\bm{1}\right)\bigg\}\: {\rm{d}}t \nonumber\\
&\qquad\qquad\qquad\qquad\quad +\: \sqrt{2T}\left(\bm{G}(\bm{x})\right)^{-1/2}\:{\rm{d}}\bm{w},
\label{HNNsde}	
\end{align}
where the state\footnote{We make the standard assumption that $g_{ii}^{-1}=\sigma_{i}^{\prime}\left(\sigma_{i}(x_{i})\right)=0$ at $x_{i}=0,1$. If the activation functions $\sigma_{i}(\cdot)$ do not satisfy this, then a reflecting boundary is needed at each $x_{i}=0,1$, to keep the sample paths within the unit hypercube; see e.g., \cite{geman1986diffusions}.} $\bm{x}\in(0,1)^{n}$, the standard Wiener process $\bm{w}\in\mathbb{R}^{n}$, the notation $\bm{1}$ stands for the $n\times 1$ vector of ones, and the parameter $T>0$ denotes the thermodynamic/annealing temperature. Recall that the metric tensor $\bm{G}(\bm{x})$ given by (\ref{Mij}) is diagonal, and hence its inverse and square roots are diagonal too with respective diagonal elements being the inverse and square roots of the original diagonal elements.  

The diffusion machine (\ref{HNNsde}) was proposed in \cite{wong1991stochastic,kesidis1995analog} to solve global optimization problems on the unit hypercube, and is a stochastic version of the deterministic HNN flow discussed in Sections \ref{SectionNatGradDescent}--\ref{SectionExample}. This can be seen as a generalization of the Langevin model \cite{gidas1985global,aluffi1985global} for the Boltzmann machine \cite{ackley1985learning}. Alternatively, one can think of the diffusion machine as the continuous-state continuous-time version of the HNN in which Gaussian white noise is injected at each node.

Let $\mathcal{L}_{\rm{FPK}}$ be the Fokker-Planck-Kolmogorov (FPK) operator \cite{risken1989fokker} associated with (\ref{HNNsde}) governing the evolution of the joint PDFs $\rho(\bm{x},t)$, i.e.,
\begin{align}
\dfrac{\partial\rho}{\partial t} = \mathcal{L}_{\rm{FPK}}\rho, \quad \rho(\bm{x},0) = \rho_{0}(\bm{x}),
\label{FPKpde}	
\end{align}
where the given initial joint PDF $\rho_{0}\in\mathcal{P}_{2}\left(\mathcal{M}\right)$. The drift and diffusion coefficients in (\ref{HNNsde}) are tailored so that $\mathcal{L}_{\rm{FPK}}$ admits the stationary joint PDF
\begin{align}
\rho_{\infty}(\bm{x}) = \dfrac{1}{Z(T)}\exp\left(-\dfrac{1}{T}f(\bm{x})\right),
\label{rhoinf}	
\end{align}
i.e., a Gibbs PDF where $Z(T)$ is a normalizing constant (known as the ``partition function") to ensure $\int\rho_{\infty}{\rm{d}}\bm{x}=1$. This follows from noting that for (\ref{HNNsde}), we have\footnote{We remind the readers that the raised indices denote the elements of the inverse of the metric tensor, i.e., $\bm{G}^{-1}=[g^{ij}]_{i,j=1}^{n}$.}
\begin{subequations}
\begin{align}
\mathcal{L}_{\rm{FPK}}\rho &\equiv \displaystyle\sum_{i=1}^{n} \dfrac{\partial}{\partial x_{i}}\bigg\{\rho\left(g^{ii}(x_{i})\dfrac{\partial f}{\partial x_{i}} - T\dfrac{\partial}{\partial x_{i}} g^{ii}(x_{i})\right) \nonumber\\
&\qquad\qquad\qquad\quad + \dfrac{1}{2}\dfrac{\partial}{\partial x_{i}}\left(2Tg^{ii}(x_{i})\rho\right)\bigg\}\label{FPKop1}\\
&=  \displaystyle\sum_{i=1}^{n} \dfrac{\partial}{\partial x_{i}}\bigg\{\rho g^{ii}(x_{i})\dfrac{\partial f}{\partial x_{i}} + Tg^{ii}(x_{i})\dfrac{\partial}{\partial x_{i}}\rho\bigg\},  \label{FPKop2}
\end{align}
\label{FPKoperator}
\end{subequations}
and that $g^{ii}(x_{i})>0$ for each $x_{i}\in(0,1)$. From (\ref{rhoinf}), the critical points of $f$ coincides with that of $\rho_{\infty}$, which is what allows to interpret (\ref{HNNsde}) as an analog machine for globally optimizing the function $f$. In fact, for the FPK operator (\ref{FPKoperator}), the solution $\rho(\bm{x},t)$ of (\ref{FPKpde}) enjoys an exponential rate of convergence \cite[p. 1358-1359]{calogero2012exponential} to (\ref{rhoinf}). We remark here that the idea of using SDEs for solving global optimization problems have also appeared in \cite{brockett1997oscillatory,das1998noisy}.

In the context of diffusion machine, the key observation that part of the drift term can be canceled by part of the diffusion term in the FPK operator (\ref{FPKop1}), thereby guaranteeing that the stationary PDF associated with (\ref{FPKoperator}) is (\ref{rhoinf}), was first pointed out by Wong \cite{wong1991stochastic}. While this renders the \emph{stationary solution} of the FPK operator meaningful for globally minimizing $f$, it is not known if the transient PDFs generated by (\ref{FPKpde}) admit a natural interpretation. We next argue that (\ref{FPKpde}) is indeed a gradient flow in the space of probability measures supported on the manifold $\mathcal{M}$.

\subsection{Wasserstein Gradient Flow}
Using (\ref{FPKop2}), we rewrite the FPK PDE (\ref{FPKpde}) as
\begin{align}
\dfrac{\partial\rho}{\partial t} &= \nabla\cdot\left(\left(\bm{G}(\bm{x})\right)^{-1}\left(\rho\nabla f + T\nabla\rho\right)\right) \nonumber\\
&= \nabla\cdot\left(\rho\left(\bm{G}(\bm{x})\right)^{-1}\nabla\zeta\right),
\label{LaplaceBeltramiLike}	
\end{align}
where 
\begin{align}
\zeta(\bm{x}) := f(\bm{x}) + T\log\rho(\bm{x}).
\label{Definezeta}	
\end{align}
For $\rho\in\mathcal{P}_{2}\left(\mathcal{M}\right)$, consider the \emph{free energy functional} $F(\rho)$ given by
\begin{align}
F(\rho) := \mathbb{E}_{\rho}\left[\zeta\right] = \!\displaystyle\int_{\mathcal{M}}\!f\rho\:\differential\bm{x} + T\!\displaystyle\int_{\mathcal{M}}\!\rho\log\rho\:\differential\bm{x},
\label{FreeEnergyDef}	
\end{align}
which is a sum of the \emph{potential energy} $\int_{\mathcal{M}}f\rho\:\differential\bm{x}$, and the \emph{internal energy} $T\int_{\mathcal{M}}\rho\log\rho\:\differential\bm{x}$. The free energy (\ref{FreeEnergyDef}) serves as the Lyapunov functional for (\ref{LaplaceBeltramiLike}) since direct calculation yields (Appendix \ref{subsecdFdt})
\begin{align}
\dfrac{\differential}{\differential t}F(\rho) = - \mathbb{E}_{\rho}\left[\left(\nabla\zeta\right)^{\top}\left(\bm{G}(\bm{x})\right)^{-1}\left(\nabla\zeta\right)\right] \leq 0, 
\label{dFdt}	
\end{align}
along any solution $\rho(\bm{x},t)$ generated by (\ref{LaplaceBeltramiLike}), thanks to $\bm{G}(\bm{x})\succ\bm{0}$ for all $\bm{x}\in\mathcal{M}=(0,1)^{n}$. In particular, the RHS of (\ref{dFdt}) equals zero iff $\nabla\zeta = 0$, i.e., at the stationary PDF (\ref{rhoinf}). For $t<\infty$, the RHS of (\ref{dFdt}) is $<0$ for any transient PDF $\rho(\bm{x},t)$ solving  (\ref{LaplaceBeltramiLike}).

To view (\ref{LaplaceBeltramiLike}) as gradient flow over $\mathcal{P}_{2}(\mathcal{M})$, we will need the notion of (quadratic) \emph{Wasserstein metric} $W_{\bm{G}}$ between two probability measures $\mu$ and $\nu$ on $\mathcal{M}$, defined as
\begin{align}
W_{\bm{G}}\left(\mu,\nu\right) := \left(\underset{\pi\in\Pi_{2}\left(\mu,\nu\right)}{\inf} \displaystyle\int_{\mathcal{M}\times\mathcal{M}} \!\!\left(d_{\bm{G}}\left(\bm{x},\bm{y}\right)\right)^{2}\:\differential\pi(\bm{x},\bm{y})\right)^{\!\!\frac{1}{2}},
\label{DefWass}	
\end{align}
where $\pi(\bm{x},\bm{y})$ denotes a joint probability measure supported on $\mathcal{M}\times\mathcal{M}$, and the geodesic distance $d_{\bm{G}}$ is given by (\ref{GeodesicDist}). The infimum in (\ref{DefWass}) is taken over the set  $\Pi_{2}\left(\mu,\nu\right)$, which we define as the set of all joint probability measures having finite second moment that are supported on $\mathcal{M}\times\mathcal{M}$, with prescribed $\bm{x}$-marginal $\mu$, and prescribed $\bm{y}$-marginal $\nu$. If $\mu$ and $\nu$ have respective PDFs $\rho_{\bm{x}}$ and $\rho_{\bm{y}}$, then we can use the notation $W_{\bm{G}}(\rho_{\bm{x}},\rho_{\bm{y}})$ in lieu of $W_{\bm{G}}(\mu,\nu)$. The subscript in (\ref{DefWass}) is indicative of its dependence on the ground Riemannian metric $\bm{G}(\cdot)$, and generalizes the Euclidean notion of Wasserstein metric \cite{villani2003topics}, i.e., $\bm{G}\equiv\bm{I}$ case. See also \cite[Section 3]{sturm2005convex} and \cite{ohta2009gradient}. Following \cite[Ch. 7]{villani2003topics} and using the positive definiteness of $\bm{G}$, it is easy to verify that (\ref{DefWass}) indeed defines a metric on $\mathcal{P}_{2}\left(\mathcal{M}\right)$, i.e., $W_{\bm{G}}$ is non-negative, zero iff $\mu=\nu$, symmetric in its arguments, and satisfies the triangle inequality.

The square of (\ref{DefWass}) is referred to as the ``optimal transport cost" (see \cite{benamou2000a} for the Euclidean case) that quantifies the minimum amount of work needed to reshape the PDF $\rho_{\bm{x}}$ to $\rho_{\bm{y}}$ (or equivalently, $\rho_{\bm{y}}$ to $\rho_{\bm{x}}$). This can be formalized via the following dynamic variational formula \cite[Corollary 2.5]{lisini2009nonlinear} for (\ref{DefWass}):
\begin{subequations}
\begin{align}
\left(W_{\bm{G}}\left(\mu,\nu\right)\right)^{2} = &\underset{\left(\rho,\bm{u}\right)}{\inf}\:\mathbb{E}_{\rho}\left[\displaystyle\int_{0}^{1}\dfrac{1}{2}\left(\bm{u}(\bm{x},\tau)\right)^{\!\top}\!\!\bm{G}(\bm{x})\bm{u}(\bm{x},\tau)\:\differential\tau\right]\nonumber\\
&\text{subject to}\quad\dfrac{\partial\rho}{\partial\tau} + \nabla\cdot\left(\rho\bm{u}\right) = 0,\label{ConstrLiouville}\\
&\qquad\qquad\quad \rho(\bm{x},\tau=0) = \dfrac{\differential\mu}{\differential\bm{x}},\label{2pbvpFirst}\\
&\qquad\qquad\quad \rho(\bm{x},\tau=1) = \dfrac{\differential\nu}{\differential\bm{x}}.\label{2pbvpSecond}
\end{align}
\label{WGBrenierBenamou}		
\end{subequations}
\noindent The infimum above is taken over the PDF-vector field pairs $(\rho,\bm{u})\in\mathcal{P}_{2}(\mathcal{M})\times\mathcal{U}$, where $\mathcal{U}:=\{\bm{u}:\mathcal{M}\times[0,\infty)\mapsto\mathcal{TM}\}$.
Recognizing that (\ref{ConstrLiouville}) is the continuity/Liouville equation for the single integrator dynamics $\dot{\bm{x}}=\bm{u}$, one can interpret (\ref{WGBrenierBenamou}) as a fixed horizon ``minimum weighted energy" stochastic control problem subject to endpoint PDF constraints (\ref{2pbvpFirst})-(\ref{2pbvpSecond}). 

Following \cite{otto2001the} and \cite[Section 1.2]{lisini2009nonlinear}, we can endow $\mathcal{P}_{2}(\mathcal{M})$ with the metric $W_{\bm{G}}$, resulting in an infinite dimensional Riemannian structure. Specifically, the tangent space $\mathcal{T}_{\rho}\mathcal{P}_{2}(\mathcal{M})$ can be equipped with the inner product $L^{2}_{\bm{G}}$ given by
\begin{align}
\langle \bm{u}, \bm{v} \rangle_{L^{2}_{\bm{G}}} := \displaystyle\int_{\mathcal{M}} \bm{u}^{\top}\bm{G}(\bm{x})\bm{v}\:\rho(\bm{x})\differential\bm{x},
\label{weigtedinnerproduct}	
\end{align}
for tangent vectors $\bm{u}, \bm{v}$. Given a smooth curve $t\mapsto \rho(\bm{x},t)$ in $\mathcal{P}_{2}(\mathcal{M})$, the associated tangent vector $\bm{u}(\bm{x},t)$ is characterized as the vector field that solves (\ref{WGBrenierBenamou}). We refer the readers to \cite[Ch. 13]{villani2008old} for further details. In particular, the ``Wasserstein gradient" \cite[Ch. 8]{ambrosio2008gradient} of a functional $\Phi(\rho)$ at $\rho\in\mathcal{P}_{2}\left(\mathcal{M}\right)$, can be defined as
\begin{align}
\nabla^{W_{\bm{G}}}\Phi(\rho) := -\nabla\cdot\left(\rho\left(\bm{G}(\bm{x})\right)^{-1}\nabla\dfrac{\delta\Phi}{\delta\rho}\right),
\label{WassGrad}	
\end{align}
where $\delta\Phi/\delta\rho$ denotes the functional derivative of $\Phi(\rho)$. Consequently, the FPK PDE (\ref{LaplaceBeltramiLike}) can be transcribed into the familiar gradient flow form:
\begin{align}
\dfrac{\partial\rho}{\partial t} = -\nabla^{W_{\bm{G}}}F(\rho). 
\label{FPKasWassersteinGradFlow}	
\end{align}
Furthermore, the Euler (i.e., first order in time) discretization of the LHS of (\ref{FPKasWassersteinGradFlow}) yields the familiar gradient descent form:
\begin{align}
\rho_{k} = \rho_{k-1} - h\:\nabla^{W_{\bm{G}}}F(\rho)\big\vert_{\rho=\rho_{k-1}},
\label{FPKasWassersteinGradDescent}	
\end{align}
where $\rho_{k} := \rho(\bm{x},t=kh)$ for $k\in\mathbb{N}$. The functional recursion (\ref{FPKasWassersteinGradDescent}) evolves on the metric space $\left(\mathcal{P}_{2}\left(\mathcal{M}\right),W_{\bm{G}}\right)$. We will see next that (\ref{FPKasWassersteinGradDescent}) can be recast as an equivalent proximal recursion, which will prove helpful for computation.

\begin{remark}\label{RemarkNotLaplaceBeltrami}
We clarify here that the FPK operator appearing in the RHS of (\ref{LaplaceBeltramiLike}), or equivalently in the RHS of (\ref{FPKasWassersteinGradFlow}), is not quite same as the Laplace-Beltrami operator \cite{brockett1997notes}
\begin{align}
\dfrac{1}{\sqrt{\det(\bm{G}(\bm{x}))}}\nabla\cdot\left(\sqrt{\det(\bm{G}(\bm{x}))}\:\rho\:\left(\bm{G}(\bm{x})\right)^{-1}\nabla \dfrac{\delta F}{\delta\rho}\right).
\label{LaplaceBeltrami}	
\end{align}	
This is because the It\^{o} SDE (\ref{HNNsde}) was hand-crafted in \cite{wong1991stochastic} such that the associated stationary PDF $\rho_{\infty}$ becomes (\ref{rhoinf}) w.r.t. the volume measure $\differential\bm{x}$, which is desired from an optimization perspective since then, the local minima of $f$ would coincide with the location of the modes of $\rho_{\infty}$. In contrast, (\ref{LaplaceBeltrami}) admits stationary PDF $\exp(-f/T)$ w.r.t. the volume measure $\sqrt{\det(\bm{G}(\bm{x}))}\:\differential\bm{x}$ (up to a normalization constant), and in general, would not correspond to the local minima of $f$.
\end{remark}

\subsection{Proximal Recursion on $\mathcal{P}_{2}\left(\mathcal{M}\right)$}
 A consequence of identifying (\ref{LaplaceBeltramiLike}) as gradient flow in $\mathcal{P}_{2}(\mathcal{M})$ w.r.t. the metric $W_{\bm{G}}$ is that the solution $\rho(\bm{x},t)$ of (\ref{FPKasWassersteinGradFlow}) can be approximated \cite{lisini2009nonlinear} via variational recursion
 \begin{align}
 \varrho_{k} = \underset{\varrho\in\mathcal{P}_{2}\left(\mathcal{M}\right)}{\arg\inf}\:\dfrac{1}{2}W_{\bm{G}}^{2}\left(\varrho_{k-1},\varrho\right) \:+\: h\:F(\varrho), \quad k\in\mathbb{N},
 \label{JKOwithG}	
 \end{align}
with the initial PDF $\varrho_{0}\equiv \rho_{0}(\bm{x})$, and small step-size $h>0$. In particular, $\varrho_{k} \rightarrow \rho(\bm{x},t=kh)$ in strong $L^{1}(\mathcal{M})$ sense as $h\downarrow 0$. That a discrete time-stepping procedure like (\ref{JKOwithG}) can approximate the solution of FPK PDE, was first proved in \cite{jko1998variational} for $\bm{G}\equiv\bm{I}$, and has since become a topic of burgeoning research; see e.g., \cite{ambrosio2008gradient,carlen2003constrained,santambrogio2017gradflows}.

The RHS in (\ref{JKOwithG}) is an infinite dimensional version of the \emph{Moreau-Yosida proximal operator} \cite{moreau1965proximite,rockafellar1976monotone,bauschke2011convex}, denoted as $\prox_{hF}^{W_{\bm{G}}}\left(\cdot\right)$, i.e., (\ref{JKOwithG}) can be written succinctly as
\begin{align}
\varrho_{k} = \prox_{hF}^{W_{\bm{G}}}\left(\varrho_{k-1}\right), \quad k\in\mathbb{N},
\label{defProx}
\end{align}
and (\ref{rhoinf}) is the fixed point of this recursion. Just like the proximal viewpoint of finite dimensional gradient descent, (\ref{JKOwithG}) can be taken as an alternative definition of gradient descent of $F(\cdot)$ on the manifold $\mathcal{P}_{2}\left(\mathcal{M}\right)$ w.r.t. the metric $W_{\bm{G}}$; see e.g., \cite{halder2017cdc,halder2018acc,caluya2019acc,caluya2019tac}. 

The utilitarian value of (\ref{JKOwithG}) over (\ref{FPKasWassersteinGradDescent}) is as follows. While the algebraic recursion (\ref{FPKasWassersteinGradDescent}) involves first order differential operator (w.r.t. $\bm{x}$), the variational recursion (\ref{JKOwithG}) is zero-th order. In the optimization community, the proximal operator is known \cite{parikh2014proximal} to be amenable for large scale computation. In our context too, we will see in Section \ref{SubsecProxAlgo} that (\ref{FPKasWassersteinGradDescent}) allows scattered weighted point cloud-based computation avoiding function approximation or spatial discretization, which will otherwise be impossible had we pursued a direct numerical method for the algebraic recursion (\ref{FPKasWassersteinGradDescent}).

Notice from (\ref{FreeEnergyDef}) that $F(\rho)$ is strictly convex in $\rho\in\mathcal{P}_{2}(\mathcal{M})$. Furthermore, the map $\rho \mapsto W^{2}_{\bm{G}}(\widetilde{\rho},\rho)$ is convex in $\rho$ for any $\widetilde{\rho}\in\mathcal{P}_{2}(\mathcal{M})$. Therefore, (\ref{defProx}) is a strictly convex functional recursion wherein each step admits unique minimizer.


\begin{figure*}[t!]
    \centering
    \begin{subfigure}[t]{0.5\textwidth}
        \centering
        \includegraphics[width=\textwidth]{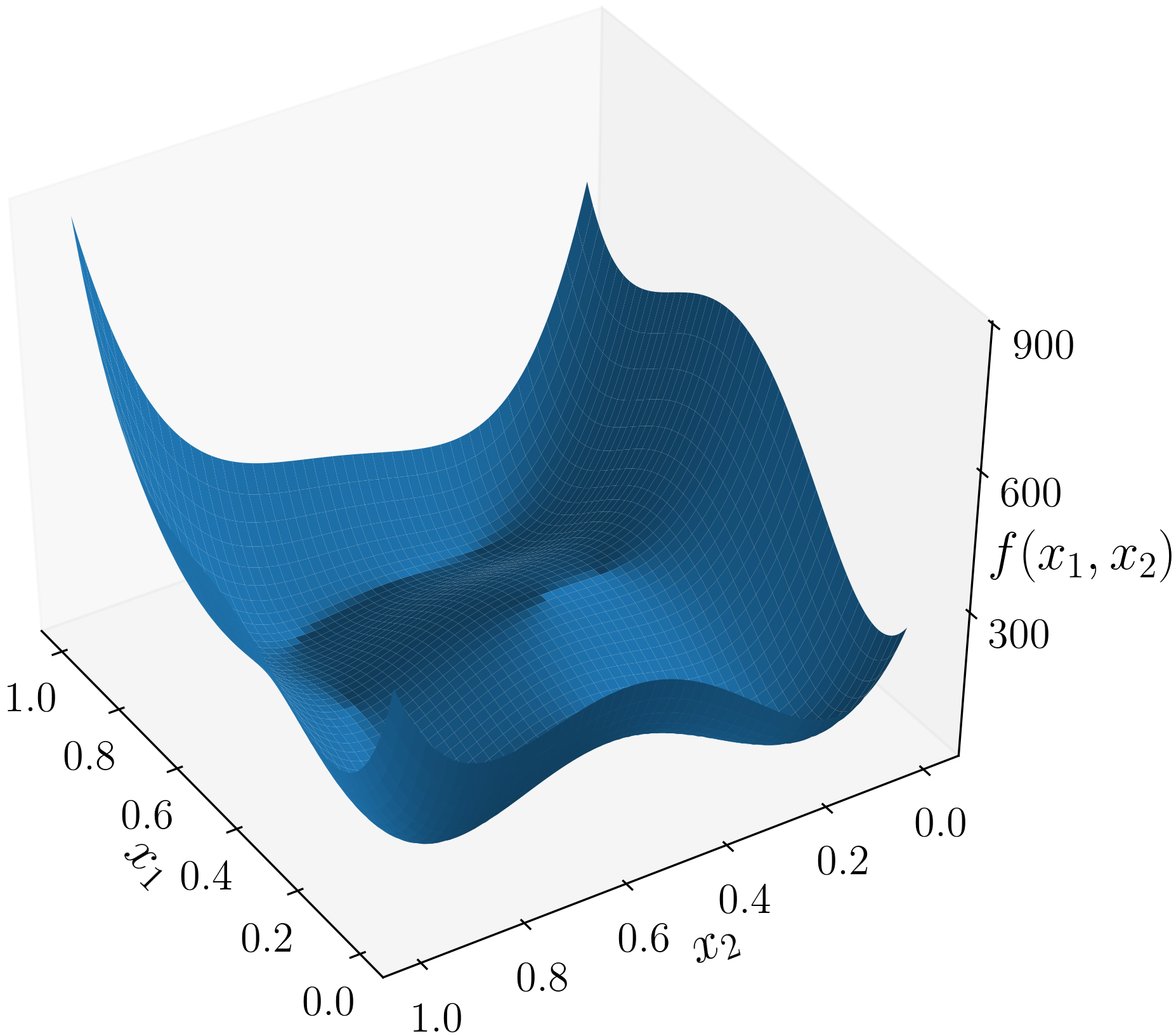}
        \caption{The function $f(x_{1},x_{2})$ given by (\ref{Himmelblau}) over $[0,1]^{2}$.}
    \end{subfigure}%
    ~ 
    \begin{subfigure}[t]{0.5\textwidth}
        \centering
        \includegraphics[width=\textwidth]{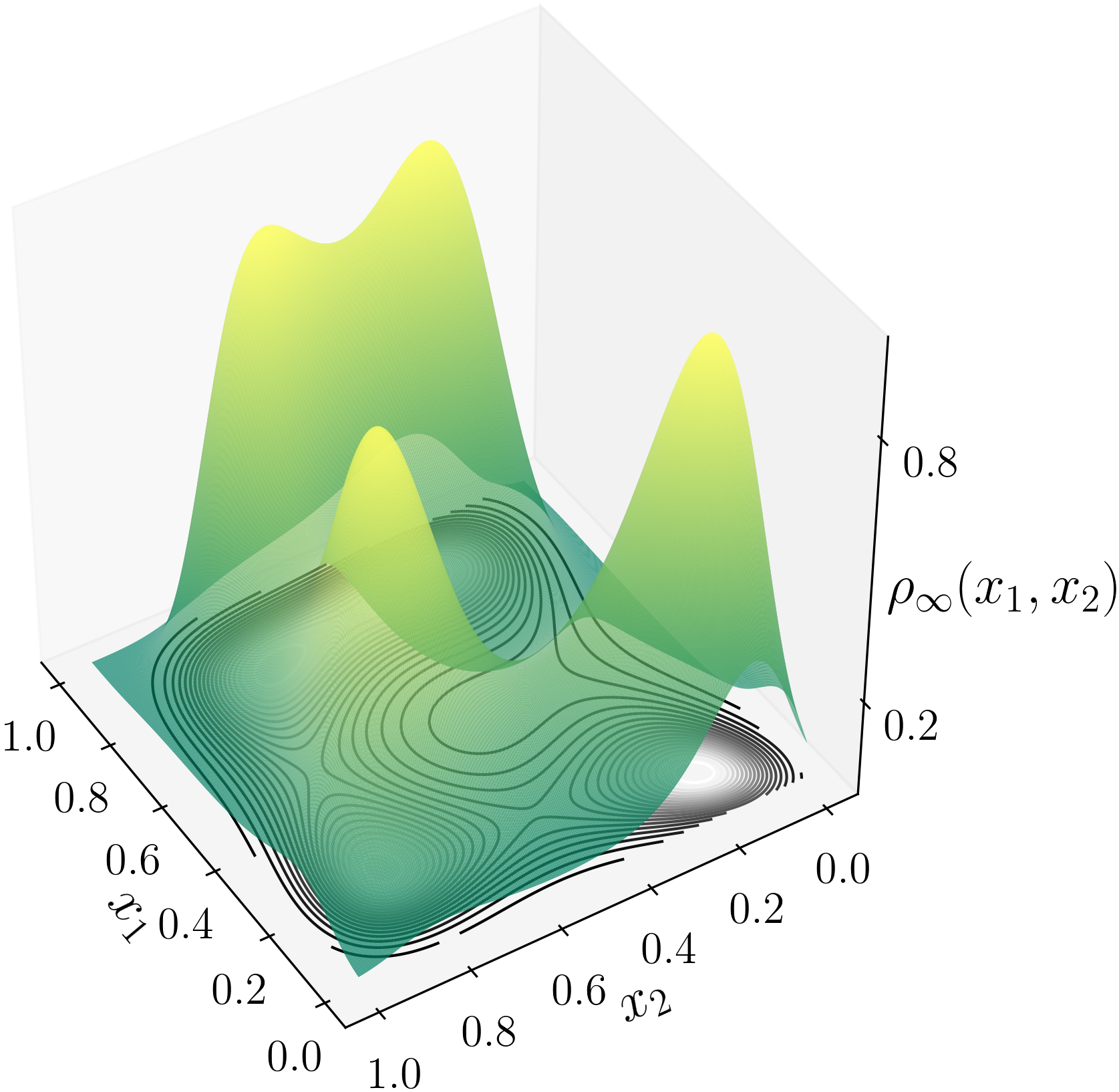}
        \caption{The PDF $\rho_{\infty}(x_{1},x_{2})$ given by (\ref{rhoinf}) with $f$ as in (\ref{Himmelblau}), and $T=25$.}
    \end{subfigure}
\caption{\small{(a) The scaled Himmelblau function $f$ given by (\ref{Himmelblau}) has four local minima in $[0,1]^{2}$. (b) The corresponding stationary PDF $\rho_{\infty}$ has four modes, i.e., local maxima, positioned at the arg\:mins of $f$. The proximal recursion (\ref{JKOwithG}) for the Example in Section \ref{ProxExample} converges to this stationary PDF $\rho_{\infty}$, as shown via the weighted point cloud evolution in Fig. \ref{FigTransientPDFscatterProxExample}.}}
\vspace*{-0.1in}
\label{FigHimmelblauExample}
\end{figure*}


\subsubsection{Interpretation for small $T$}
In the global optimization context, the parameter $T$ in the free energy (\ref{FreeEnergyDef}) plays an important role. For $T\downarrow 0$, the SDE (\ref{HNNsde}) reduces to the ODE (\ref{AmariNatGradODE}), and (\ref{FreeEnergyDef}) simplifies to $\mathbb{E}_{\rho}[f]$ with $\rho$ being the time-varying Dirac PDF along the solution of (\ref{AmariNatGradODE}). In that case, $W_{\bm{G}}\equiv d_{\bm{G}}$, and hence (\ref{JKOwithG}) reduces to the proximal recursion for \emph{finite dimensional} natural gradient descent (Section \ref{SectionNatGradDescent}):
\begin{align}
\bm{x}_{k} = \underset{\bm{x}\in\mathcal{M}}{\arg\inf}\:\dfrac{1}{2}d_{\bm{G}}^{2}\left(\bm{x}_{k-1},\bm{x}\right) \:+\: h\:f(\bm{x}), \quad k\in\mathbb{N}.
\label{FiniteDimJKO}	
\end{align}
Furthermore, the FPK PDE (\ref{LaplaceBeltramiLike}) reduces to the Liouville PDE 
\[\dfrac{\partial\rho}{\partial t} = \nabla\cdot\left(\left(\bm{G}(\bm{x})\right)^{-1}\rho\nabla f\right),\]
whose stationary PDF becomes an weighted sum of Diracs located at the stationary points of $f$. The stationary solution of (\ref{FiniteDimJKO}) converges to this stationary PDF. Thus, (\ref{JKOwithG}) with small $T$ approximates the deterministic natural gradient descent.

\subsubsection{Interpretation for large $T$}
For $T$ large, the second summand in (\ref{FreeEnergyDef}) dominates, i.e., (\ref{JKOwithG}) reduces to maximum entropy descent in $\mathcal{P}_{2}(\mathcal{M})$ w.r.t. the metric $W_{\bm{G}}$. The resulting proximal recursion admits uniform PDF as the fixed point, which is indeed the $T\rightarrow\infty$ limit of the stationary PDF (\ref{rhoinf}). Thus, (\ref{JKOwithG}) with large $T$ approximates pure diffusion.


\begin{figure*}[htpb]
\centering
\includegraphics[width=\linewidth]{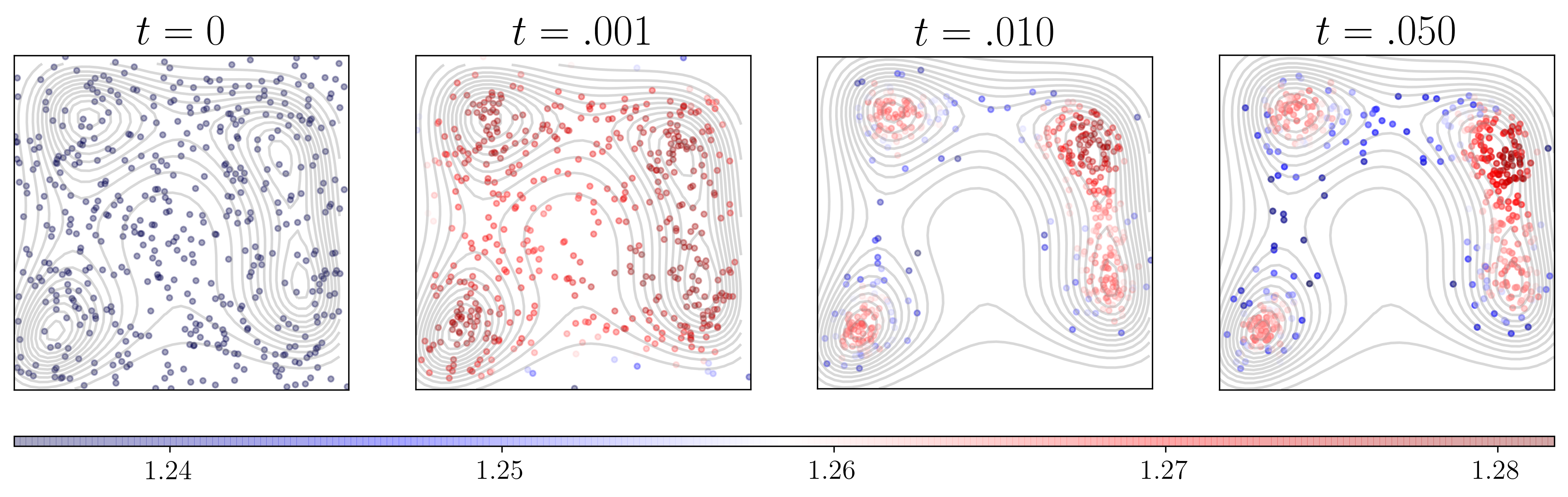}
\vspace*{-0.1in}
\caption{\small{The proximal (weighted scattered point cloud) joint PDF evolution over $[0,1]^{2}$ associated with (\ref{tanhSDE}) and (\ref{Himmelblau}), starting from the uniform initial joint PDF. The joint PDF evolution is computed via (\ref{JKOwithG}) with $W_{\bm{G}}$ given by (\ref{DefWass}), wherein $d_{\bm{G}}$ is given by (\ref{GeodesicDistSoftProj}). The color (\emph{red = high, blue = low}) denotes the joint PDF value at a point at that time (\emph{see colorbar}). The location of a point in $[0,1]^{2}$ is governed by (\ref{tanhSDE}). In the sub-figures above, the background contour lines correspond to the stationary PDF (\ref{rhoinf}). The simulation details are given in Section \ref{ProxExample}.}}
\vspace*{-0.15in}
\label{FigTransientPDFscatterProxExample}
\end{figure*}


\subsection{Proximal Algorithm}\label{SubsecProxAlgo}
The recursion (\ref{JKOwithG}) is not only appealing from theoretical perspective in that it reveals the metric geometry of gradient flow, but also from an algorithmic perspective in that it opens up the possibility to compute the transient solutions of the PDE (\ref{LaplaceBeltramiLike}) in a scalable manner via convex optimization. Similar to the finite dimensional proximal algorithms \cite{parikh2014proximal}, the recently introduced infinite dimensional proximal algorithms \cite{caluya2019acc,caluya2019tac} were shown to be amenable for large scale implementation. Next, we adapt the proximal algorithm in \cite[Section III.B]{caluya2019tac} to solve (\ref{JKOwithG}) for an example problem.

\subsubsection{Example for stochastic HNN}\label{ProxExample}
We consider the activation functions $\sigma_{i}(\cdot)$ as in Section \ref{SectionExample}, resulting in 
\begin{align}
g^{ii} = g^{-1}_{ii} \overset{(\ref{HessianExample})}{=} 2\beta_{i}x_{i}(1-x_{i}), \: \beta_{i}>0, \quad i=1,\hdots,n,
\label{Examplegiiinv}	
\end{align}
which in turn gives the following instance of (\ref{HNNsde}):
\begin{align}
{\rm{d}}x_{i} = &\bigg\{-2\beta_{i}x_{i}\left(1-x_{i}\right)\nabla f(\bm{x}) + 2\beta_{i}T\left(1-2x_{i}\right)\bigg\}\: {\rm{d}}t \nonumber\\
&\qquad +\: \sqrt{4T\beta_{i}x_{i}\left(1-x_{i}\right)}\:{\rm{d}}w_{i}, \quad i=1,\hdots,n.
\label{tanhSDE}	
\end{align}
To compute the transient joint PDF flow associated with (\ref{tanhSDE}), we solve (\ref{JKOwithG}) with $W_{\bm{G}}$ given by (\ref{DefWass}), wherein $d_{\bm{G}}$ is given by (\ref{GeodesicDistSoftProj}).

For numerical simulation, we set $n=2$, $T=25$, $\beta_{i}=1/4$ for $i=1,2$, and
\begin{align}
f\left(x_{1},x_{2}\right) =& \left(\left(10 x_{1} - 5\right)^{2} + 10 x_{2} - 16\right)^{2} \nonumber\\
& \qquad+ \left(10 x_{1} - 12 + \left(10 x_{2} - 5\right)^{2}\right)^{2},
\label{Himmelblau}	
\end{align}
where $(x_{1},x_{2})\in[0,1]^{2}$. The function $f$ in (\ref{Himmelblau}) is a re-scaled version of the so-called \emph{Himmelblau's function} \cite{himmelblau1972applied}, often used as a benchmark for non-convex optimization. As seen in Fig. \ref{FigHimmelblauExample}, the function $f$ has four local minima, and the corresponding stationary PDF (\ref{rhoinf}) has four modes (i.e., local maxima) at the arg\:mins of $f$.

We generate $N=500$ samples from the uniform initial joint PDF $\rho_{0}$ supported on $[0,1]^{2}$ in the form of the point cloud $\{\bm{x}_{0}^{i},\varrho_{0}^{i}\}_{i=1}^{N}$.  Here, $\{\bm{x}_{0}^{i}\}_{i=1}^{N}$ denotes the location of the samples in $[0,1]^{2}$ while $\{\varrho_{0}^{i}\}_{i=1}^{N}$ denotes the values of the initial joint PDF evaluated at these locations. We apply the gradient flow algorithm in \cite[Section III.B]{caluya2019tac} to recursively compute the updated point clouds $\{\bm{x}_{k}^{i},\varrho_{k}^{i}\}_{i=1}^{N}$ for $k\in\mathbb{N}$. Specifically, we update the sample locations $\{\bm{x}_{k}^{i}\}_{i=1}^{N}$ via the Euler-Maruyama\footnote{Instead of the Euler-Maruyama, it is possible to use any other stochastic integrator for this part of computation; see \cite[Remark 1 in Section III.B]{caluya2019tac}.} scheme \cite[Ch. 10]{kloeden2013} applied to (\ref{tanhSDE}) with time-step $h=10^{-4}$, i.e., for $k\in\mathbb{N}$,
\begin{align}
&\bm{x}_{k}^{i} = \:\bm{x}_{k-1}^{i} + 2h\bm{\beta}\odot\bigg\{\!\!- \bm{x}_{k-1}^{i}\odot\left(\bm{1}-\bm{x}_{k-1}^{i}\right)\odot\nabla f(\bm{x}_{k-1}^{i}) \nonumber\\
&+ T\left(\bm{1}-2\bm{x}_{k-1}^{i}\right)\!\!\bigg\} + \sqrt{4T\bm{\beta}\odot\bm{x}_{k}^{i}\odot(\bm{1}-\bm{x}_{k}^{i})}\odot\Delta\bm{w}_{k}^{i},
\label{EulerMaruyamaStochasticHNN}	
\end{align}
where $\{\Delta\bm{w}_{k}^{i}\}_{i=1}^{N}$ are independent and identically distributed samples from a Gaussian joint PDF with zero mean and covariance $h\bm{I}$. To compute the joint PDF values $\{\varrho_{k}^{i}\}_{i=1}^{N}$ at the updated sample locations, we apply the gradient flow algorithm in \cite[Section III.B]{caluya2019tac} to solve the proximal recursion (\ref{JKOwithG}) with entropic-regularization parameter $\epsilon = 0.1$. This results in a scattered weighted point cloud evolution as depicted in Fig. \ref{FigTransientPDFscatterProxExample}, and thanks to the exponential rate-of-convergence \cite{calogero2012exponential} to the stationary PDF, the local minima of $f$ can be quickly ascertained from the evolution of the modes of the transient joint PDFs. Fig. \ref{FigCompTimeProxExample} shows that the resulting computation has extremely fast runtime due to certain nonlinear contraction mapping property established in \cite{caluya2019tac}.

An appealing feature of this proximal gradient descent computation is that it does not involve spatial discretization or function approximation, and is therefore scalable to high dimensions. In particular, we discretize time but not the state space, and evolve weighted point clouds over time. We eschew the algorithmic details and refer the readers to \cite{caluya2019tac}.

This example demonstrates that the HNN SDE (\ref{HNNsde}) and its associated proximal recursion (\ref{JKOwithG}) for gradient descent in $\mathcal{P}_{2}(\mathcal{M})$ can be a practical computational tool for global optimization.


\begin{figure}[htpb]
\centering
\includegraphics[width=\linewidth]{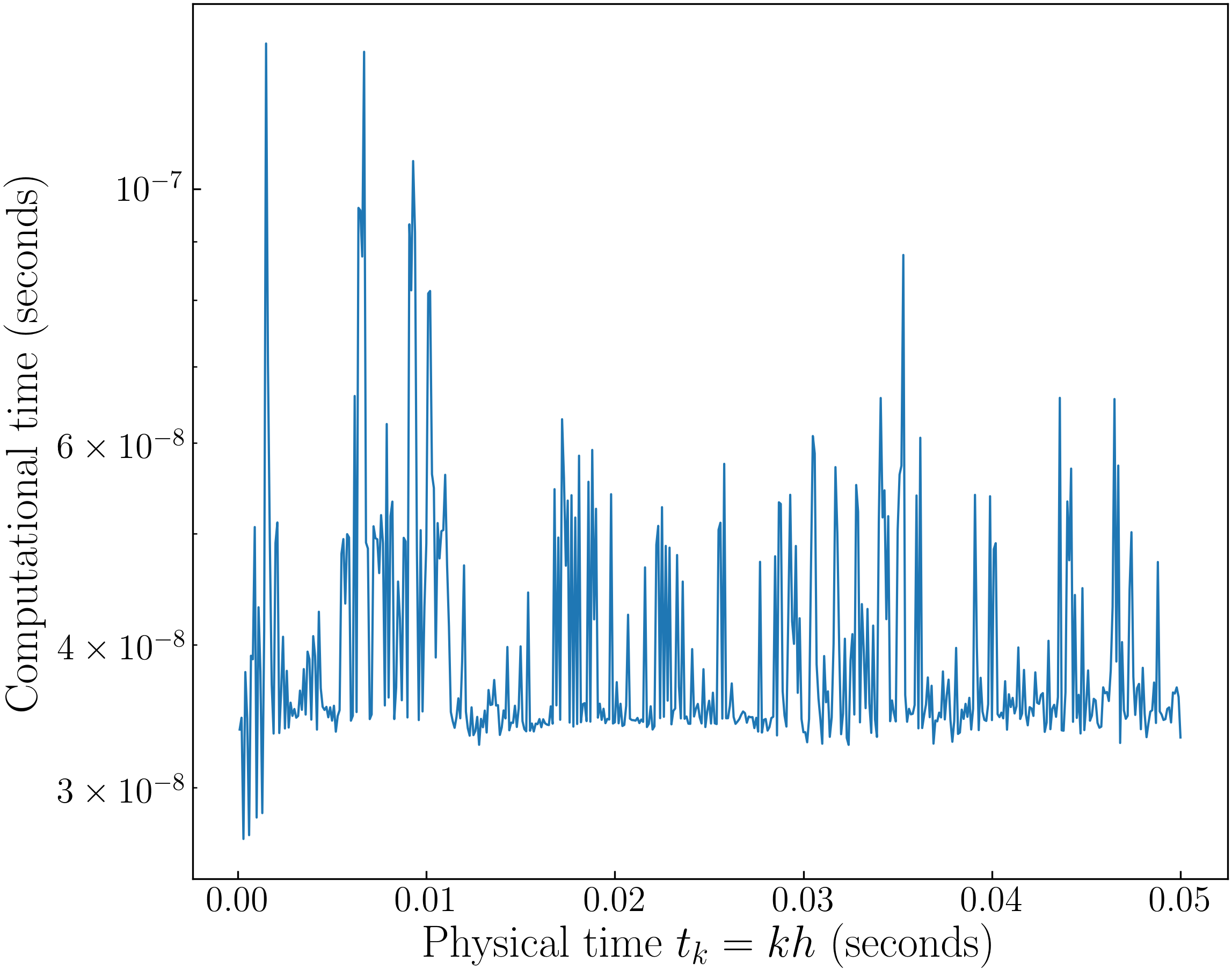}
\vspace*{-0.1in}
\caption{\small{The computational runtime associated with the proximal recursion (\ref{JKOwithG}) for the numerical example given in Section \ref{ProxExample}. Here, the physical time-step $h=10^{-4}$, and $k\in\mathbb{N}$.}}
\vspace*{-0.1in}
\label{FigCompTimeProxExample}
\end{figure}


\setlength{\tabcolsep}{12pt}
\renewcommand{\arraystretch}{1.8}
\begin{table*}%
\centering
\begin{tabular}{| c | c | c |}
\hline\hline
Attributes &   Gradient Flow in Deterministic HNN &   Gradient Flow in Stochastic HNN  \\[5pt]
\hline
\hline
Graphical illustration     
    &
\raisebox{-\totalheight}{\includegraphics[width=0.3\textwidth]{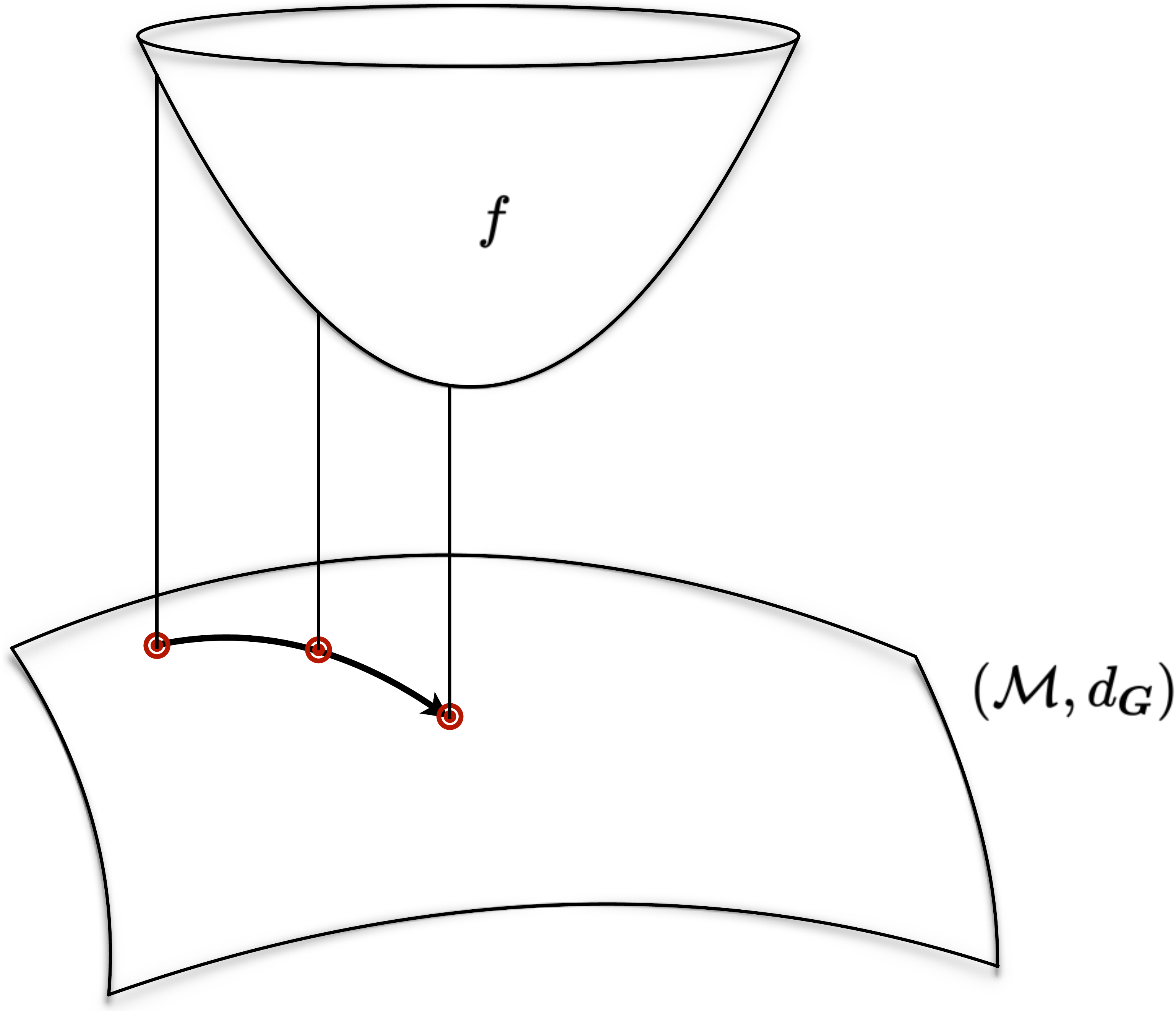}}
    &
\raisebox{-\totalheight}{\includegraphics[width=0.3\textwidth]{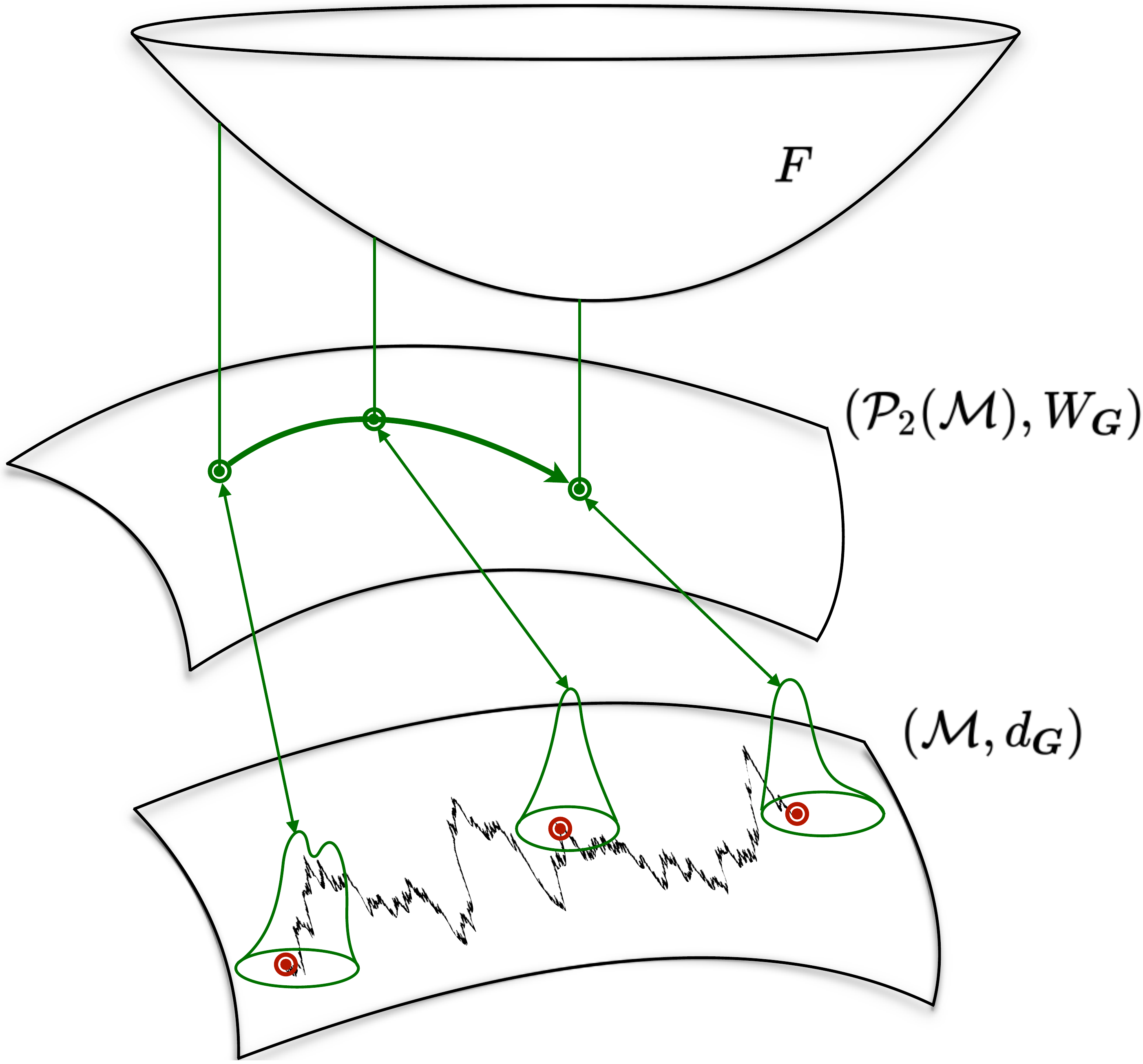}}   \\[5pt]
\hline
Variational problem & $\underset{\bm{x}\in\mathcal{M}}{\inf}\:f(\bm{x})$ & $\underset{\rho\in\mathcal{P}_{2}\left(\mathcal{M}\right)}{\inf}\:F(\rho)$\\[5pt]
\hline
Objective & $f(\bm{x})$ & $F(\rho)$ given by (\ref{FreeEnergyDef})\\[5pt]
\hline
Distance & $d_{\bm{G}}$ given by (\ref{GeodesicDist}) & $W_{\bm{G}}$ given by (\ref{DefWass})\\[5pt]
\hline
Differential equation & $\dfrac{\differential\bm{x}}{\differential t} = -\left(\bm{G}(\bm{x})\right)^{-1}\nabla f$ & $\dfrac{\partial\rho}{\partial t} = \nabla\cdot\left(\rho\left(\bm{G}(\bm{x})\right)^{-1}\dfrac{\delta F}{\delta\rho}\right)$ \\
& $:=-\nabla^{d_{\bm{G}}}f(\bm{x})$& $\hspace*{-0.3in}:=-\nabla^{W_{\bm{G}}}F(\rho)$\\[5pt]
\hline
Algebraic recursion & $\bm{x}_{k} = \bm{x}_{k-1} - h\nabla^{d_{\bm{G}}}f(\bm{x})\big\vert_{\bm{x}=\bm{x}_{k-1}}$ & $\rho_{k} = \rho_{k-1} - h\nabla^{W_{\bm{G}}}F(\rho)\big\vert_{\rho=\rho_{k-1}}$\\[5pt]
\hline
Proximal recursion & $\bm{x}_{k} = \underset{\bm{x}\in\mathcal{M}}{\arg\inf}\:d_{\bm{G}}^{2}\left(\bm{x},\bm{x}_{k-1}\right) + h f(\bm{x})$ & $\rho_{k} = \underset{\rho\in\mathcal{P}_{2}\left(\mathcal{M}\right)}{\arg\inf}\:W_{\bm{G}}^{2}\left(\rho,\rho_{k-1}\right) + h F(\rho)$\\
& $\hspace{-0.55in}:=\prox_{hf}^{d_{\bm{G}}}\left(\bm{x}_{k-1}\right)$ & $\hspace*{-0.7in}:=\prox_{hF}^{W_{\bm{G}}}\left(\rho_{k-1}\right)$\\[5pt]
\hline
\end{tabular}
\caption{Comparison between the natural (finite dimensional) gradient descent for the deterministic HNN and the measure-valued (infinite dimensional) gradient descent for the stochastic HNN. The graphical illustrations in the first row show that the finite dimensional gradient flow for the deterministic HNN evolves on $\mathcal{M}$, and is a gradient descent of $f$ w.r.t. the distance $d_{\bm{G}}$. On the other hand, the infinite dimensional gradient flow for the stochastic HNN evolves on $\mathcal{P}_{2}(\mathcal{M})$, and is a gradient descent of $F$ w.r.t. the distance $W_{\bm{G}}$. Notice that for stochastic HNN, the sample paths of the SDE (\ref{HNNsde}) in $\mathcal{M}$, induce a flow of PDFs in $\mathcal{P}_{2}(\mathcal{M})$.}
\label{TableGradFlowHNN}
\vspace*{-0.15in}
\end{table*}


\section{Conclusions}
In this paper, we highlight the metric geometry of the dynamics associated with the deterministic and the stochastic Hopfield Neural Networks (HNN). We show that the deterministic HNN flow is a natural gradient descent with respect to a metric tensor that depends on the choice of activation functions. Alternatively, the same deterministic HNN flow can be interpreted as mirror descent on a suitable manifold that again is governed by the activation functions. The stochastic HNN, also known as the ``diffusion machine", defines a Wasserstein gradient flow on the space of probability measures induced by the underlying stochastic differential equation on the ground Riemannian manifold. This particular viewpoint leads to infinite dimensional proximal recursion on the space of probability density functions that provably approximates the same Wasserstein gradient flow in the small time step limit. We provide a numerical example to illustrate how this proximal recursion can be implemented over probability weighted scattered point cloud, obviating spatial discretization or function approximation, and hence is scalable. Our numerical experiments reveal that the resulting implementation is computationally fast, and can serve as a practical stochastic optimization algorithm. A conceptual summary of the gradient descent interpretations for the deterministic and the stochastic HNN is outlined in Table \ref{TableGradFlowHNN}.

We clarify here that our intent in this paper has been understanding the basic geometry underlying the differential equations for the HNN flow. We hope that the results of this paper will be insightful to the practitioners in the optimization and machine learning community, and motivate further theoretical and algorithmic development of global optimization via neural networks.


\appendix

\subsection{Derivation of (\ref{dFdt})}\label{subsecdFdt}
We start by noting that
\begin{align}
\dfrac{\differential F}{\differential t} = \!\!\displaystyle\int_{\mathcal{M}}\!\dfrac{\delta F}{\delta\rho}\dfrac{\partial\rho}{\partial t}\:\differential\bm{x} &\overset{(\ref{FPKasWassersteinGradFlow})}{=}\!\!\displaystyle\int_{\mathcal{M}}\!\dfrac{\delta F}{\delta\rho}\cdot\left(-\nabla^{W}_{\bm{G}}F(\rho)\right)\differential\bm{x},\nonumber\\
&\overset{(\ref{WassGrad})}{=}\!\!\displaystyle\int_{\mathcal{M}}\!\dfrac{\delta F}{\delta\rho}\nabla\cdot\left(\rho{\bm{G}}^{-1}\nabla\dfrac{\delta F}{\delta\rho}\right)\differential\bm{x}, \nonumber\\
&= -\!\!\displaystyle\int_{\mathcal{M}}\!\!\left(\nabla\dfrac{\delta F}{\delta\rho}\right)^{\!\!\top}\!\!\!\rho\bm{G}^{-1}\nabla\dfrac{\delta F}{\delta\rho}\:\differential\bm{x},\label{dFdtintermed}	
\end{align}
where the last step follows from integration-by-parts w.r.t. $\bm{x}$, and zero flux of probability mass across the boundary of $\mathcal{M}$.

For $F(\rho)$ given by (\ref{FreeEnergyDef}), $\delta F/\delta\rho = f + T\left(1+\log\rho\right)$, and therefore,
\begin{align}
\nabla\dfrac{\delta F}{\delta\rho} = \nabla\left(f + T\log\rho\right)  \overset{(\ref{Definezeta})}{=} \nabla\zeta.
\label{nablaFderivative}	
\end{align}
Combining (\ref{dFdtintermed}) and (\ref{nablaFderivative}), we arrive at (\ref{dFdt}).

\end{document}